\definecolor{light}{gray}{0.9}
\definecolor{medium}{gray}{0.8}
\newtheorem{theorem}{Theorem}
\newtheorem{proposition}[theorem]{Proposition}
\theoremstyle{definition}
\newcounter{rem}
\newcounter{def}
\newtheorem{remark}[rem]{Remark}
\newtheorem{definition}[def]{Definition}
\numberwithin{equation}{section}
\def\ZZ{{\mathbb Z}}
\def\NN{{\mathbb N}}
\def\RR{{\mathbb R}}
\def\QQ{{\mathbb Q}}
\def\11{{\mathbb 1}}
\def\cP{{\mathscr P}}
\def\cQ{{\mathscr Q}}
\def\cB{{\mathscr B}}
\def\cE{{\mathscr E}}
\def\cF{{\mathscr F}}
\def\Sg{{\textup {Sg}}}
\def\SgRev{{\textup {SgRev}}}
\def\RevNPR{{\textup {RevNPR}}}
\def\RevPR{{\textup {RevPR}}}
\def\Rev{{\textup {Rev}}}
\def\PR{{\textup {PR}}}
\def\NPR{{\textup {NPR}}}
\def\CW{\textup{CW}}
\def\GL{\operatorname{GL}}
\def\conv{\operatorname{conv}}
\def\vol{\operatorname{vol}}
\def\codim{\operatorname{codim}}
\def\ttt#1{\texttt{#1}}
\let\epsilon=\varepsilon
\def\Vol{\operatorname{Vol}}
\def\aff{\operatorname{aff}}
\def\Ht{\operatorname{Ht}}
\def\cD{{\mathcal D}}
\def\strut{\vphantom{\Large(} }
\begin{document}

\title[Polytope volume by descent and applications in social choice]{Polytope volume by descent in the face lattice\\ and applications in social choice}
\author[W. Bruns]{Winfried Bruns}
\address{Winfried Bruns\\ Universit\"at Osnabr\"uck\\ FB Mathematik/Informatik\\ 49069 Osna\-br\"uck\\ Germany}
\email{wbruns@uos.de}
\author[B. Ichim]{Bogdan Ichim}
\address{Bogdan Ichim \\ University of Bucharest \\ Faculty of Mathematics and Computer Science \\ Str. Academiei 14 \\ 010014 Bucharest \\ Romania \\ and \newline
 Simion Stoilow Institute of Mathematics of the Romanian Academy \\ Research Unit 5 \\ C.P. 1-764 \\ 010702 Bucharest \\ Romania}
\email{bogdan.ichim@fmi.unibuc.ro \\ bogdan.ichim@imar.ro}

\subjclass[2010]{52B20, 13F20, 14M25, 91B12}

\keywords{rational polytope, face lattice, volume, triangulation, voting theory}

\thanks{The second author was partially supported by a grant of Romanian Ministry of Research and Innovation, CNCS - UEFISCDI, project number PN-III-P4-ID-PCE-2016-0157, within PNCDI III}

\maketitle

\begin{abstract}
We describe the computation of polytope volumes by descent in the face lattice, its implementation in Normaliz, and the connection to reverse-lexicographic triangulations. The efficiency of the algorithm is demonstrated by several high dimensional polytopes of different characteristics. Finally, we present an application to voting theory  where polytope volumes appear as probabilities of certain paradoxa.
\end{abstract}


\section{Introduction}

The volume of a polytope is a geometric magnitude that has been studied since antiquity~-- formulas for the area of polygons or the volume of common $3$-dimensional polytopes like pyramids were known in Babylonian and Egyptian mathematics. From a modern viewpoint one can say that these formulas are recursive:  they reduce volume computations to the measurement of lengths.

In toric algebra and geometry, volumes are almost a synonym for multiplicities, defined as leading coefficients of Hilbert (quasi)polynomials. For this classical connection see Teissier \cite{Teiss} or Bruns and Gubeladze \cite[Section 6.E]{BG}.
The package Normaliz \cite{Nmz} has contained options for the computation of Hilbert series and multiplicities from its beginnings. Until recently, the only approach to volumes was based on lexicographic (or placing) triangulations: the volume of a polytope is obtained as sum of simplex volumes, and the volume of a simplex is essentially computed as a determinant. (We refer the reader to \cite{BG} for unexplained algebraic notions and to Section \ref{polytopes} for geometric terminology.) The algorithm has been improved in several steps; see \cite{BIS} and \cite{BSS} for a description of the present state and \cite{BS} for a refined version computing integrals.

An attractive application of polytope volumes is probabilities of certain events in voting theory where results of elections with $n$ candidates can be identified with lattice points in the positive orthant of $\RR^N$, $N=n!$. Several classical phenomena, most prominently the Condorcet paradox, can be described by homogeneous linear inequalities. In a suitable probabilistic model, their probabilities for a large number of voters can be computed as lattice normalized volumes of rational polytopes. Already for $n=4$ one has $N=24$, so that these computations pose a challenging problem. Nevertheless, quite a number of interesting volumes and Hilbert series have been determined via lexicographic triangulations \cite{BIS2}.

When more complicated computations of voting theory turned out inaccessible for the approach by lexicographic triangulations, we decided to add an algorithm for the volume of a polytope $P$ that is based on descent in the face lattice of $P$. (The twofold meaning of ``lattice'' is an unfortunate, if customary clash of terminology.) In principle, it follows the classical formulas in reducing the computation of volumes to the measurement of lengths. However, we take a hybrid approach, and compute the volume of a simplex as a determinant. For this reason our descent algorithm behaves quite well also for simplicial polytopes.

We have not yet made precise what we mean by ``volume''. From our viewpoint, the central invariant for rational polytopes is \emph{lattice (normalized) volume}, for which a unimodular lattice simplex $\Sigma$ has volume $1$ in the affine space spanned by $\Sigma$. Lattice normalized volume in $\RR^n$ is invariant under the action of $\GL(n,\ZZ)$. This group can be exactly represented in rational arithmetic. In contrast to Euclidean volume, which is invariant under the orthogonal group $O(n)$, it is a rational number also for lower dimensional polytopes (i.e., polytopes of dimension smaller than that of the ambient space). Therefore it is well-suited for precise computation by recursion to faces of a polytope. At the end, the Euclidean volume can be (and is) obtained by conversion from the lattice volume.

When the height $\Ht_F(v)$ of a point $v$ over a facet is also measured by the lattice, then the recursion formula takes the simple form
$$
\Vol(P)=\sum_{F\text{ facet of }P} \Ht_F(v)\Vol(F)
$$
for a point $v\in P$. We call a subset $\cD$ of the face lattice a \emph{descent system} if it contains all faces that come up in the successive application of this formula. We try to keep $\cD$ small by carefully choosing the points $v$, but the polytope should not have too many non-simplex faces to keep the descent system from explosion.  The largest descent system that has been computed for this paper has cardinality $> 6\cdot 10^8$. This number makes it clear that a careful balance between memory usage and computation time is imperative if a wide range of applications is desired.

Though it is not visible straightaway, there is a triangulation or at least a similar  decomposition in the background of the descent algorithm, namely a reverse lexicographic (or pulling) triangulation. The crucial point is \emph{not} to compute it explicitly, but to distill the volume relevant information into a weight function associated with the descent system.

Normaliz is not the first package to exploit descent in the face lattice for volume computations. In B\"{u}eler and Enge's package Vinci \cite{vinci} it appears as the \emph{hot} algorithm, where ``hot'' is an acronym for ``hybrid orthonormalization  technique''. Vinci is based on the article \cite{practical} by Büeler, Enge and Fukuda. In contrast to Normaliz, Vinci uses floating point arithmetic and computes only Euclidean volumes, and it does not contain convex hull computation or vertex enumeration, so that another package is needed for this auxiliary task.

In their very recent paper \cite{Emiris} Emiris and Fisikopoulos discuss probabilistic methods for estimating the volume of a polytope. On p.~ 38:2 they say that several packages, including Normaliz, ``cannot handle general polytopes for dimension $d > 15$''. Based on our experience, we cannot fully support this finding.

Section \ref{polytopes} introduces the geometric terminology used in the sequel. In Section \ref{LattVol} we discuss lattice volume, height and the linear algebra aspects of the algorithm. Section \ref{system} describes the algorithm and its connection with reverse lexicographic triangulations. We give sample computations in Section \ref{samples}. They are based on Normaliz 3.6.1\footnote{released July 7, 2018} and include a wide range of polytopes with quite different characteristics and of dimensions between $15$ and $55$. The descent algorithm has staid essentially unchanged since version 3.6.1.

Section \ref{samples} contains also a comparison of the two Normaliz algorithms (triangulation and descent) to vinci's ``hot'' (descent) and ``rch''  (reverse lexicographic triangulation), and additionally to vinci's ``rlass'' (a revised version of Lassere's algorithm \cite{lass}) that is based on the same type of recursion, but has no Normaliz analogue.  Despite of the same basic algorithmic approach of Normaliz' descent and vinci's hot, the implementations in Normaliz and vinci differ substantially, with significant consequences for applicability. A zip file with input for all computations of this paper can be downloaded from
\begin{center}
\url{https://www.normaliz.uni-osnabrueck.de/wp-content/uploads/2020/03/PolytopeVolumes.zip}.
\end{center}

Finally, Section \ref{Volumes Computations} discusses applications to voting theory. We have included it to demonstrate the efficiency of our algorithm in the field of voting theory. It provided many challenges for us without which this algorithm would not have been developed. We quote from  \cite{Ouafdi}: \emph{It seems, however, that the latest version of Normaliz is, at the present time, the most efficient software tool to obtain the IAC probabilities of electoral outcomes	when more than three alternatives are in contention \dots}
		
\section{Polytopes and cones}\label{polytopes}

A \emph{polytope} $P\subset \RR^n$ is the convex hull of finitely many points $v_1,\dots,v_m\in\RR^n$:
$$
P=\conv(v_1,\dots,v_m)=\Bigl\{ \alpha_1v_1+\dots+ \alpha_mv_m: 0\le \alpha_i\le 1, i=1\dots, m, \sum_{i=1}^{m} \alpha_i=1   \Bigr\}.
$$
By a fundamental theorem of Minkowski, a polytope can equivalently be described as a compact set that is the intersection of finitely many affine halfspaces:
$$
P=\bigcap_{j=1}^s H_i^+,
$$
where an \emph{affine halfspace} is a set
$$
H^+=\{ x\in\RR^n: \lambda(x)\ge \beta \}
$$
for a nonzero linear form $\lambda\in(\RR^n)^*$ and $\beta \in\RR$. (Not necessarily compact intersections of halfspaces are called \emph{polyhedra}.) The halfspace $H^+$ is bounded by the hyperplane $H=\{x\in\RR^n:  \lambda(x)= \beta \}$.

 The two descriptions of polytopes are often called \emph{V-representations} and \emph{H-represent\-ations}. If $v_1,\dots,v_m\in\QQ^n$, then $P$ is a \emph{rational polytope}; equivalently one can choose the halfspaces to be \emph{rational}: $\lambda$ has coefficients in $\QQ$ and $\beta\in\QQ$. The \emph{dimension} of $P$ is the dimension of its affine hull $\aff(P)$. The polytope $P\subset \RR^n$ is \emph{full dimensional} if $\dim P=n$.

A \emph{cone} $C$ is the conical hull of finitely many vectors:
$$
C=\bigl\{ \alpha_1v_1+\dots+ \alpha_mv_m:\alpha_i\ge 0, i=1\dots, m \bigr\}.
$$
Equivalently, it is the intersection of finitely many \emph{linear halfspaces} in whose definition $\beta=0$. The attribute \emph{rational} is given in analogy with polyhedra. A cone is \emph{pointed} if $\{0\}$ is the only vector subspace contained in $C$. For computations one usually represents a polytope $P\subset\RR^n$ as the intersection of a pointed cone $C\subset\RR^{n+1}$ with an affine hyperplane $H\cong \RR^n$. We introduce this technique in Section \ref{LattVol}; see the discussion surrounding Equation \eqref{homog}.

Let $P$ be a polytope or cone (or a polyhedron in general). If $P$ is contained in  one of the two halfspaces bounded by a hyperplane $S$, then $S$ is called a \emph{support hyperperplane} of $P$. A subset $F\subset P$ is called a \emph{face} if $F=P\cap S$ for a support hyperplane $S$ of $P$. The polytope $P$ is an improper face of $P$, as well as $\emptyset$ if $P$ is a polytope. There are only finitely many faces, and every face is a polytope or cone, respectively. The maximal proper faces of $P$ are called \emph{facets}. The faces of $P$ are partially ordered by inclusion. They actually form a lattice, the \emph{face lattice} of $P$. Every face $F\neq P$ is the intersection of facets. The properties of the face lattice are important in the following; for the details see Ziegler \cite{Zi}. The only point in a face of dimension $0$ is a \emph{vertex} of $P$. A face of dimension $1$ is an \emph{edge} if $P$ is a polytope and an \emph{extreme ray} if $P$ is a pointed cone. The difference $\codim F=\dim P-\dim F$ is called the \emph{codimension} of $F$. We say that $F$ is a \emph{subfacet} if it has codimension $2$. Often subfacets are called \emph{ridges}.

The vertices of a polytope $P$ form the unique minimal set $V$ of points such that $P=\conv(V)$. Similarly a set $E$ containing exactly one vector from each extreme ray is  a unique minimal generating set of a pointed cone. It is clear that one wants to work with such minimal descriptions in computations.

In the following we use the term ``support hyperplane'' in a restricted sense: we additionally require that a support hyperplane $S$ intersects $P$ in a facet. This is justified: $P=\aff(P)\cap H_1^+\cap\dots\cap H_s^+$ where $H_1^+,\dots,H_s^+$ are halfspaces whose support hyperplanes intersect $P$ in pairwise different facets. The intersections $\aff(P)\cap H_i^+$ in such an irredundant representation are uniquely determined. In the fulldimensional case the halfspaces themselves are therefore uniquely determined.

A \emph{simplex} is a polytope of dimension $d$ with exactly $d+1$ vertices: a triangle in dimension $2$, a tetrahedron in  dimension $3$ etc. A polytope is \emph{simplicial} if all its facets are simplices. If every vertex of a polytope $P$ of dimension $d$ is contained in exactly $d$ facets, then $P$ is \emph{simple}. It follows that every face is contained in exactly $\codim F$ facets.

\section{Lattice normalized volume}\label{LattVol}

As mentioned in the introduction, we compute the lattice normalized volume of a rational polytope $P\subset\RR^n$, i.e., a polytope with vertices in $\QQ^n$. Let us explain this notion.
The affine hull $A=\aff(P)$ is a rational affine subspace of $\RR^n$. First assume that $0\in A$. Then $L=\aff(P)\cap\ZZ^n$ is a subgroup of $\ZZ^n$ of rank $d=\dim P$ (and $\ZZ^n/L$ is torsionfree). Choose a $\ZZ$-basis $v_1,\dots,v_d$ of $L$. The \emph{lattice (normalized) volume} $\Vol$ on $A$ is the Lebesgue measure on $A$ scaled in such a way that the simplex $\conv(0,v_1,\dots,v_d)$ has measure $1$. The definition is independent of the choice of $v_1,\dots,v_d$ since all invertible $d\times d$ matrices over $\ZZ$ have determinant $\pm 1$. If $0\notin A$, then we replace $A$ by a translate $A_0=A-w$, $w\in A$, and set $\Vol(X)=\Vol(X-w)$ for  $X\subset A$. This definition is independent of the choice of $w$ since $\Vol$ is translation invariant on $A_0$. Note that the polytope containing a single point $x\in\QQ^n$ has lattice volume $1$. (If desired, the definition of lattice volume can be extended to arbitrary measurable subsets of $A$.)

If $P$ is a lattice polytope, i.e., a polytope with vertices in $\ZZ^n$, then $\Vol(P)$ is an integer. For an arbitrary rational polytope we have $\Vol(P)\in\QQ$. As a consequence, $\Vol(P)$ can be computed precisely by rational arithmetic.

If $P$ has full dimension $n$, then $\Vol(P)=n!\vol(P)$ where $\vol$ denotes the Euclidean volume. So it is only a matter of scaling by the integer $n!$ whether one computes the lattice volume or the Euclidean volume. However, if $\dim P<n$, then $\vol(P)$ need not be rational anymore: the diagonal in the unit square has Euclidean length $\sqrt 2$, but lattice length  $1$.

A second invariant we need is the lattice height of a \emph{rational} point $x$ over a rational subspace $H\neq\emptyset$. More generally, one can consider points $x$ such that $\aff(H,x)$ is again rational; for example, this is the case if $H$ is a hyperplane in $\RR^n$. If $x\in H$, we set $\Ht_H(x)=0$. Otherwise let $A=\aff(x,H)$ so that $H$ is a hyperplane in $A$. Assume first that $0\in A$.

Then $H$ is cut out from $A$ by an equation $\lambda(y)=\beta$ with a primitive $\ZZ$-linear form $\lambda$ on $L=A\cap \ZZ^n$ and $\beta\in\QQ$. That $\lambda$ is \emph{primitive} means that there exists $y\in L$ such that $\lambda(y)=1$. Note that $\lambda$ can be chosen such that it has integral values not only on $L$, but on the whole of $\ZZ^n$. In fact, $L$ is not an arbitrary sublattice, but $\ZZ^n/L$ is torsionfree: $L$ is the intersection of a vector subspace of $\RR^n$ with $\ZZ^n$. This implies that $L$ has a complement $M$ in $\ZZ^n$ such that $\ZZ^n=L\oplus M$. So $\lambda$ can be extended as a $\ZZ$-linear form on $\ZZ^n$ by the linear form $0$ on $M$.

With this choice of $\lambda$, $\Ht_H(x)=|\lambda(x)-\beta|$ is called the\emph{ lattice height} of $x$ over $H$. (There are exactly two choices for the pair $(\lambda,\beta)$, differing by the factor $-1$.) If $0\notin A$, then we choose an auxiliary point $v\in A$, replace $H$ by $H-v$, $A$ by $A-v$ and $x$ by $x-v$. (In the algorithm we will only have to deal with the case $0\in H$.)
If $P$ is a rational polytope  and $F$ is a facet or, more generally, a face of $P$, then $\Ht_F(x)=\Ht_H(x)$ where $H=\aff(F)$.

In order to compute the lattice height $\Ht_{\{0\}}(x)$ of a rational point $x$ over the origin, one considers the ray from $0$ through $x$ and chooses $u$ as the first nonzero integer point on this ray. Then $v=au$ for some $a\in\QQ$, and $\Ht_{\{0\}}(x)=\Ht_{\{x\}}(0) =a$.

In Figure \ref{Ht_fig} we have chosen $v=(1/2,1)$ and $w=(-1/2,1)$. The primitive linear forms defining $E$ and $F$ are $\lambda(x,y)=y$ and $\mu(x,y)=-2x+y$, respectively. Note that for measuring $\Ht_{\{v\}}(w)$ we must replace $\mu$ by $\mu/2$ since $\mu(s)$ and $\mu(t)$ differ by $\pm2$ for successive lattice points $s$ and $t$ on the line through $w$ and $v$.
Analogously, for measuring  $\Ht_{\{v\}}(0)$ we must replace $\lambda$ by $\lambda/2$ since $\lambda (s)$ and $\lambda(t)$ differ by $\pm2$ for successive lattice points $s$ and $t$ on the line through $0$ and $v$. We obtain
\begin{gather*}
\Ht_{F}(w)=2, \qquad \Ht_{\{v\}}(w)=1,\qquad \Ht_{\{v\}}(0)=1/2,\qquad \Ht_{E}(0)=1,\\
\Vol(P)=1,\qquad \Vol(E)=1,\qquad \Vol(F)=1/2.
\end{gather*}
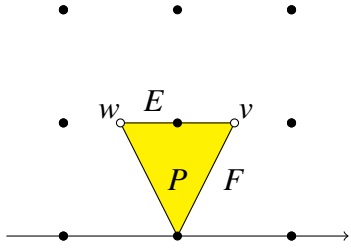
\begin{figure}[hbt]
\begin{center}
\begin{tikzpicture}[scale=1.5]

\filldraw[yellow] (0,0) -- (0.5,1) -- (-0.5,1) -- cycle;
\draw (0,0) -- (0.5,1) -- (-0.5,1) -- cycle;
\draw[->] (-1.5,0) -- (1.5,0);

\foreach \x in {-1,-1,0,1,1}
\foreach \y in {0,1,2}
{
	\filldraw[fill=black] (\x,\y)  circle (1pt);
}
\filldraw[fill=white] (-0.5,1) circle (1pt);
\filldraw[fill=white] (0.5,1) circle (1pt);
\draw node at (0.6, 1.1){$v$};
\draw node at (-0.6, 1.1){$w$};
\draw node at (0.5, 0.5){$F$};
\draw node at (-0.2,1.2){$E$};
\draw node at (0,0.5){$P$};
\end{tikzpicture}
\end{center}
\caption{A rational polytope}\label{Ht_fig}
\end{figure}

\begin{proposition}\label{decomp}
Let $P\subset\RR^n$ be a rational polytope, and $v\in P$. Then
\begin{equation}
\Vol(P)=\sum_{F\text{ facet of }P} \Ht_F(v)\Vol(F).\label{Vol_dec}
\end{equation}
\end{proposition}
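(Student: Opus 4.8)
My plan is to prove the formula by decomposing $P$ into pyramids over its facets with common apex $v$, then matching the lattice-normalized volume of each pyramid with the term $\Ht_F(v)\Vol(F)$. First I would reduce to the full-dimensional case: since both sides of \eqref{Vol_dec} are intrinsic to $A=\aff(P)$, I may replace $\RR^n$ by $A$ after translating so that $0\in A$, and identify $L=A\cap\ZZ^n$ with $\ZZ^d$ via a $\ZZ$-basis $v_1,\dots,v_d$, where $d=\dim P$. Under this identification lattice-normalized volume becomes $d!$ times Euclidean volume, and the lattice height $\Ht_F(v)$ becomes the value of a primitive integral linear form. Thus it suffices to treat $P\subset\RR^d$ of full dimension $d$ with $\Vol=d!\vol$.

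**The geometric decomposition.** The key geometric step is the standard pyramidal subdivision: for $v\in P$ and each facet $F$ of $P$, form the pyramid $C_F=\conv(\{v\}\cup F)$. I would argue that the $C_F$ have pairwise disjoint interiors and that their union is $P$. The interiors are disjoint because a point in the interior of two distinct pyramids would, via the rays from $v$, force two distinct facets to meet a generic ray in its interior, contradicting convexity. The union is all of $P$ because any ray from $v$ exits $P$ through some facet $F$, so every point on that ray segment lies in $C_F$. Hence $\vol(P)=\sum_F \vol(C_F)$, where facets $F$ containing $v$ contribute $\vol(C_F)=0$, consistent with $\Ht_F(v)=0$ there.

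**Volume of a single pyramid.** The heart of the matter is the pyramid volume formula. I claim $\Vol(C_F)=\Ht_F(v)\,\Vol(F)$. In the full-dimensional setting, $F$ spans an affine hyperplane $H=\aff(F)$ cut out by $\lambda(y)=\beta$ with $\lambda$ primitive integral, and $\Ht_F(v)=|\lambda(v)-\beta|$. Choose coordinates so that $\lambda$ is the last coordinate functional (possible since $\lambda$ is primitive, by extending a vector with $\lambda$-value $1$ to a $\ZZ$-basis); then the Euclidean $d$-volume of the pyramid equals $\tfrac{1}{d}\cdot(\text{base }(d-1)\text{-volume})\cdot(\text{height})$ by the classical cone formula. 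Converting to lattice volume, the factor $d!$ in dimension $d$ versus $(d-1)!$ in dimension $d-1$ supplies the extra $d$, which cancels the $1/d$, and the change of the height measurement from Euclidean to lattice is exactly absorbed by the primitivity of $\lambda$. Summing over all facets yields \eqref{Vol_dec}.

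**Main obstacle.** The step I expect to require the most care is the bookkeeping in the pyramid volume identity, specifically the compatibility of the three lattice normalizations in play simultaneously: the volume on $A=\aff(P)$, the volume on $H=\aff(F)$, and the height form $\Ht_F$. One must verify that the lattice $L\cap H$ is precisely the lattice defining $\Vol(F)$ and that the chosen coordinate change sending $\lambda$ to the last coordinate is compatible with both the $d$-dimensional and the $(d-1)$-dimensional lattices at once; this is where the primitivity of $\lambda$ is indispensable, since a non-primitive form would introduce an extra index factor and break the clean identity. The disjointness and covering claims of the pyramidal subdivision are geometrically intuitive but also merit a careful convexity argument to avoid gaps on the measure-zero boundaries, though these contribute nothing to volume.
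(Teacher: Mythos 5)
Your proof is correct, and it shares the paper's overall skeleton~--- the pyramidal decomposition $P=\bigcup_F\conv(v,F)$ with overlaps of measure zero, reducing everything to the identity $\Vol(\conv(v,F))=\Ht_F(v)\Vol(F)$~--- but it takes a genuinely different route for that key identity. The paper triangulates $F$ to reduce to a pyramid over a simplex, clears denominators by scaling with an integer $k$ (both sides scale by $k^d$) so as to land in the case of a lattice simplex with apex a vertex, and then simply invokes the known multiplicity formula \cite[3.9]{BG}. You instead compute directly: identify $\aff(P)\cap\ZZ^n$ with $\ZZ^d$, perform a unimodular change of coordinates making the primitive form $\lambda$ the last coordinate functional (possible because $\ker\lambda\cap\ZZ^d$ is a free direct summand; your phrase ``extend a vector with $\lambda$-value $1$ to a $\ZZ$-basis'' should be read with the remaining basis vectors taken inside $\ker\lambda$, otherwise $\lambda$ need not become exactly the last coordinate), and then apply the Euclidean cone formula together with the normalizations $d!$ versus $(d-1)!$. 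Your bookkeeping is sound: matrices in $\GL(d,\ZZ)$ preserve Lebesgue measure even though they distort the metric, in the new coordinates the lattice height $|\lambda(v)-\beta|$ coincides with the Euclidean distance to $\{x_d=\beta\}$, and the translated facet lattice is $\ZZ^{d-1}\times\{0\}$ precisely because $\lambda$ is primitive~--- you correctly identify this as the crux, since a non-primitive form would introduce an index factor. The trade-off: the paper's argument is shorter given the citation and needs no explicit measure computation, while yours is self-contained, avoids both the triangulation of $F$ and the scaling trick, works for rational data without passing through the lattice case, and makes explicit exactly where primitivity enters.
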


\begin{proof}
$P$ is the union of the ``pyramids'' $\conv(v,F)$ where $F$ runs through the facets of $P$. These pyramids intersect in lower dimensional polytopes $\conv(v,G)$ where $G$ is a face of codimension $\ge 2$ of $P$. Since the intersections have measure $0$ in the Lebesgue measure on $\aff(P)$ independently of the scaling, the proposition follows by the additivity of the measure, provided $\Vol(\conv(v,F))=\Ht_F(v)\Vol(F)$ for all facets $F$ of $P$.

To prove this claim, we can triangulate $F$, and use additivity again, thereby reducing it to the case of a pyramid over a simplex $\Delta$, $\dim\Delta=\dim P-1$. Then we choose a positive integer $k$ such that $kv$ and all vertices of $k\Delta$  have integer coordinates. This scales $\Vol(\conv(v,\Delta))$ as well as $\Ht_\Delta(v)\Vol(\Delta)$ by the factor $k^d$, $d=\dim P$. Therefore we can finally assume that $v$ is a vertex of a lattice simplex $P$ and $F=\Delta$ is its opposite facet. Under these hypotheses \cite[3.9]{BG} says exactly what we need: $\Vol(P)=\Ht_F(v)\Vol(F)$.  In \cite{BG} the lattice volume of a lattice polytope is called its \emph{multiplicity}; we will explain this terminology in Remark \ref{rem_algebra}. For the convenience of the reader we include a proof of \cite[3.9]{BG} below.
\end{proof}

Figure \ref{dec_fig} illustrates Proposition \ref{decomp} in a simple case. It is clear that one should take $v$ as a vertex of $P$ in order to minimize the number of nonzero summands in Equation \eqref{Vol_dec}. Our choice of $v$ will be discussed in the next section.
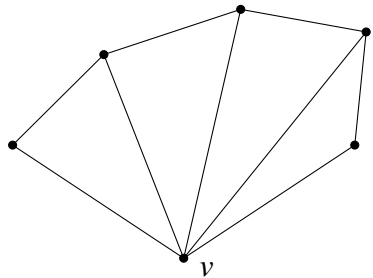
\begin{figure}[hbt]
\begin{center}
\begin{tikzpicture}[scale=1.5]
\draw (0,0) -- (1.5,1) -- (1.6,2) -- (0.5,2.2) -- (-0.7,1.8) -- (-1.5,1) -- cycle;
\draw (0,0) -- (0.5,2.2);
\draw (0,0) -- (1.6,2);
\draw (0,0) -- (-0.7,1.8);
\draw node at (0.2,-0.1){$v$};
\filldraw[fill=black] (0,0)  circle (1pt);
\filldraw[fill=black] (1.5,1)  circle (1pt);
\filldraw[fill=black] (1.6,2)  circle (1pt);
\filldraw[fill=black] (0.5,2.2)  circle (1pt);
\filldraw[fill=black] (-0.7,1.8)  circle (1pt);
\filldraw[fill=black] (-1.5,1)  circle (1pt);
\end{tikzpicture}
\end{center}
\caption{Decomposition of a polygon into pyramids}\label{dec_fig}
\end{figure}

\begin{remark}
The proposition holds for all $v\in\aff(P)$, provided we replace $\Ht$ by its signed variant: In the definition choose the sign of $\lambda$ in such a way that $\lambda(x)-\beta\ge 0$ for $x\in P$ and set $\Ht_F(y)=\lambda(y)-\beta$ for $y\in\aff(P)$.  This is important if one wants to represent $P$ by the signed decomposition into the pyramids $\conv(v,F)$. Normaliz does not use signed decompositions at present.
\end{remark}

Together with the observation that a single point has lattice volume $1$, the proposition constitutes a complete, recursive  algorithm for the computation of lattice volumes, provided one can compute lattice height. In principle, this is the algorithm that we have implemented. However, an implementation of any practical value requires considerable care, as we will see in the next section.

The first useful modification is to stop the recursion if one hits a simplex face in the descent, and to compute the volume of a simplex directly.

\begin{proposition}\label{simplex}
Let $\Delta\subset\RR^n$ be a rational simplex with vertices $v_0,\dots,v_d$. Choose a basis $u_1,\dots,u_d$ of the lattice $\aff(\Delta-v_0)\cap \ZZ^n$. Define the $d\times d$ matrix $T=(t_{ij})$  by the representations $v_i-v_0=\sum_{j=1}^{d} t_{ij}u_j$, $i=1,\dots,d$. Then
$$
\Vol(\Delta)=|\det(T)|.
$$
\end{proposition}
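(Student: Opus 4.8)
The plan is to reduce everything to the definition of lattice volume by passing to suitable coordinates. By translation invariance of $\Vol$ I may assume $v_0=0$, so that $\Delta=\conv(0,v_1,\dots,v_d)$ lies in the rational subspace $A_0=\aff(\Delta-v_0)$ and $L=A_0\cap\ZZ^n$ is a lattice of rank $d$ with the chosen $\ZZ$-basis $u_1,\dots,u_d$. First I would use this basis to set up a linear isomorphism $\varphi\colon A_0\to\RR^d$ sending $u_j$ to the standard basis vector $e_j$; by construction $\varphi$ maps $L$ isomorphically onto $\ZZ^d$, and the defining relations $v_i=\sum_{j=1}^d t_{ij}u_j$ show that $\varphi(v_i)$ is exactly the $i$-th row of $T$.

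Next I would identify the scaling constant relating $\Vol$ on $A_0$ to the ordinary Lebesgue measure $\mu$ on $\RR^d$ transported through $\varphi$. Since $\varphi$ carries the unimodular simplex $\conv(0,u_1,\dots,u_d)$ onto the standard simplex $\conv(0,e_1,\dots,e_d)$, whose Euclidean volume is $1/d!$, and since by definition the former has lattice volume $1$, the two measures differ by the factor $d!$: for every measurable $X\subset A_0$ one has $\Vol(X)=d!\,\mu(\varphi(X))$. Applying this with $X=\Delta$ and using the classical formula for the Euclidean volume of the simplex spanned by $0$ and the rows of $T$, namely $\mu(\varphi(\Delta))=\frac{1}{d!}|\det T|$, yields $\Vol(\Delta)=d!\cdot\frac{1}{d!}|\det T|=|\det T|$, as claimed.

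I expect the only genuinely delicate point to be the bookkeeping of the normalization factor $d!$, that is, keeping straight that lattice volume equals $d!$ times Euclidean volume in a unimodular coordinate system rather than coinciding with it. Two routine checks then complete the argument and deserve a sentence each: that $|\det T|$ is independent of the choice of basis $u_1,\dots,u_d$ (a different basis replaces $T$ by $TU$ with $U\in\GL(d,\ZZ)$, so $|\det T|$ is unchanged), and that the reduction to $v_0=0$ is harmless because $\Vol$ was defined to be translation invariant on $A_0$. As an alternative one could avoid the explicit constant $d!$ by arguing inductively from Proposition \ref{decomp}, expanding $\Delta$ along the facet opposite $v_0$ and recognizing the cofactor expansion of $\det T$; but the direct coordinate computation above seems shorter and more transparent.
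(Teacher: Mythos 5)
Your proof is correct and is essentially the paper's own argument written out in full: the paper disposes of the proposition in one line by invoking the substitution rule for Lebesgue integrals, which is exactly your change of coordinates $\varphi$ sending $u_j\mapsto e_j$ together with the bookkeeping of the normalization factor $d!$ and the classical formula for the Euclidean volume of the simplex spanned by the rows of $T$. Your two routine checks (basis independence via $\GL(d,\ZZ)$ and the harmless translation by $v_0$) are already built into the paper's definition of lattice volume, so nothing is missing.
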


\begin{proof}
This follows immediately from the substitution rule for Lebesgue integrals applied to the constant function $f=1$. See \cite[2.C]{BG} for an algebraic proof.
\end{proof}

\begin{remark}\label{Delta_proof}
(a) With the help of Proposition \ref{simplex} one can easily complete the proof of Proposition \ref{decomp} without a reference to \cite{BG}. After a parallel translation we can assume that $v_0=0$ is a vertex of $\Delta$ and $v=v_d$ ($v$ as as in Proposition \ref{simplex}). Let $U$ be the $\QQ$-vector subspace spanned by $v_1,\dots,v_d$ and $U'$ its subspace spanned by $v_1,\dots,v_{d-1}$. Then we define sublattices $L\subset \ZZ^n$ and $L'\subset L$ by $L=\ZZ^n\cap U$ and $L'=\ZZ^n\cap U'$. Evidently $L/L'$ is torsionfree. In other words, $L'$ is a direct summand of $L$. This means, we can complete a $\ZZ$-basis $u_1,\dots,u_{d-1}$ of $U'$ by $u_d\in L$ to a $\ZZ$-basis of $L$.

Let $\Delta'$ be the $(d-1)$-subsimplex $\conv(0,v_1,\dots,v_{d-1})$. With the notation of Proposition \ref{simplex}, we obtain $\Vol(\Delta)=|t_{dd}|\Vol(\Delta')$ by Laplace expansion of $\det(T)$ along the last column of $T$. For Proposition \ref{decomp} it remains to show that $|t_{dd}|=\Ht_{\Delta'}(v_d)$. In fact,
the primitive $\ZZ$-linear form $\lambda$  on $L$ that vanishes on $L'$ has value $\pm 1$ on $u_d$. So $|\lambda(v_d)|=|t_{dd}|$.

(b) As an anonymous referee pointed out, one can prove Proposition \ref{decomp} by using the notion of determinant of an affine lattice; see Martinet \cite[Prop. 1.3.4]{Martinet}. This approach reduces Proposition \ref{decomp} to an assertion about Euclidean volume.
\end{remark}

If we follow the definition of lattice height, then it is clear that we must choose a vertex of $F$ as the origin of the coordinate system for every face $F$ that comes up in the recursive application of Proposition \ref{decomp}. This complication disappears if $0\in\RR^n$ is a vertex of every face $F$ involved. The reduction of the general case to the special situation is by the standard operation of homogenization.

Normaliz represents a rational polytope $P$ in homogeneous coordinates as follows: $C$ is a pointed cone generated by integral vectors $v_1,\dots,v_m$, $\delta$ is a primitive $\ZZ$-linear form on $\ZZ^n$ such that $\delta(x)>0$ for all $x\in C$, $x\neq 0$,  and
\begin{equation}
P=\{x\in C: \delta(x)= 1\}.\label{homog}
\end{equation}
(In the terminology of \cite{BG}, $\delta$ defines a grading on $\ZZ^n$.) If $P$ is not already given in this form, we can easily realize it as such by introducing a homogenizing $(n+1)$-th coordinate: we replace $P\subset\RR^n$ by $P'=P\times \{1\}\subset \RR^{n+1}$, set $C=\RR_+ P'$ and $\delta(x)=x_{n+1}$ for $x=(x_1,\dots,x_{n+1})$. Consequently one can directly assume that $P$ is given by Equation \eqref{homog}. Then it is natural to pass to $\overline P=\conv(0,P)$. All faces of $\overline P$ have $0$ as a vertex, except $P$ and its faces. Under a mild condition we have $\Vol(P)=\Vol(\overline P)$, and in the general case we can easily find the correcting factor:

\begin{proposition}\label{nondegerate}
Suppose that the rational polytope $P$ is given as in Equation \eqref{homog}. If $\aff(P)$ contains an integral point, then $\Vol(P)=\Vol(\overline P)$. More generally, if $k$ is the smallest positive integer such that $\aff(kP)$ contains an integral point, then $\Vol(P)=k\Vol(\overline P)$.
\end{proposition}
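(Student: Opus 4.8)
The plan is to apply the decomposition formula of Proposition~\ref{decomp} to the pyramid $\overline P=\conv(0,P)$, taking the apex $0$ itself as the distinguished point $v$. First I would note that $\delta\equiv 1$ on $\aff(P)$ while $\delta(0)=0$, so $0\notin\aff(P)$ and $\overline P$ is a genuine pyramid of dimension $d+1$, $d=\dim P$. Its facets are the base $P$ together with the lateral facets $\conv(0,G)$, one for each facet $G$ of $P$. Since $0$ is a vertex of $\overline P$, the hypothesis $v\in\overline P$ of Proposition~\ref{decomp} holds, and each lateral facet contains $0$, so $\Ht_{\conv(0,G)}(0)=0$ by the definition of lattice height. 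The sum in \eqref{Vol_dec} therefore collapses to the single term coming from the base, giving
\[
\Vol(\overline P)=\Ht_P(0)\,\Vol(P).
\]
Thus the whole statement reduces to computing the lattice height of the origin over $H=\aff(P)$ and showing it equals $1/k$.

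To compute $\Ht_P(0)$ I would work inside $A=\aff(\overline P)$, which is a linear subspace of $\RR^n$ since $0\in A$, and set $L=A\cap\ZZ^n$. The restriction $\delta|_L\colon L\to\ZZ$ is a $\ZZ$-linear form whose image is a subgroup $c\ZZ\subseteq\ZZ$; here $c$ is a \emph{positive} integer, because the integral generators $v_1,\dots,v_m$ of $C$ lie in $L$ and satisfy $\delta(v_i)>0$, so $\delta(L)\neq 0$. Consequently $\lambda:=\tfrac1c\,\delta|_L$ is primitive (there is $y\in L$ with $\delta(y)=c$, i.e.\ $\lambda(y)=1$), and the hyperplane $\aff(P)=\{y\in A:\delta(y)=1\}$ is cut out in $A$ by the primitive equation $\lambda(y)=1/c$. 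By the definition of lattice height this yields $\Ht_P(0)=|\lambda(0)-1/c|=1/c$.

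It then remains to identify $c$ with $k$. Since scaling by $k$ is linear, $\aff(kP)=\{y\in A:\delta(y)=k\}$, and an integral point of $\aff(kP)$ is exactly a $y\in L$ with $\delta(y)=k$; such a point exists if and only if $k\in\delta(L)=c\ZZ$, that is, if and only if $c\mid k$. Hence the smallest positive integer $k$ for which $\aff(kP)$ meets $\ZZ^n$ is $k=c$. Combining this with the previous two paragraphs gives $\Vol(\overline P)=\tfrac1k\Vol(P)$, i.e.\ $\Vol(P)=k\Vol(\overline P)$, and the first assertion is the special case $c=1$ in which $\aff(P)$ already contains an integral point. The only genuinely delicate point is the height computation: one must resist assuming that the natural equation $\delta(y)=1$ for $\aff(P)$ is already primitive on $L$. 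It is the index $c=[\,\ZZ:\delta(L)\,]^{-1}$-type rescaling, forced by primitivity, that produces the correction factor $k$; everything else is a direct application of Proposition~\ref{decomp} and the bookkeeping of the homogenizing form $\delta$.
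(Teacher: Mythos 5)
Your proof is correct and follows essentially the same route as the paper's: the paper likewise applies Proposition~\ref{decomp} at $v=0$ (where the lateral facets of $\overline P$ contribute zero height) and computes $\Ht_P(0)=1/k$ by showing that $\delta/k$ is a primitive linear form on $L=\aff(\overline P)\cap\ZZ^n$. Your version merely makes explicit what the paper compresses~--- introducing the generator $c$ of the subgroup $\delta(L)\subseteq\ZZ$ and then identifying $c=k$ via the parallels $\aff(jP)$~--- which is exactly the content of the paper's first two sentences.
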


\begin{proof}
It is enough to discuss the general case.
That the parallels $\aff(jP)$, $1\le j <k$,  do not contain lattice points implies that $k\mid \delta(x)$ for all $x\in L=\aff(\overline P)\cap \ZZ^n$. On the other hand, $\delta(y)=k$ for some $y\in\aff(kP)\cap L$. Therefore $\delta/k$ is a primitive linear form on $L$. Clearly $\Ht_{P}(0)=1/k$, and we can apply  Proposition \ref{decomp}.
\end{proof}

If, in the situation of Proposition \ref{nondegerate}, $\dim P=n-1$ or $P$ itself contains a lattice point, then evidently $\Vol(P)=\Vol(\overline P)$.

The discussion above is summarized in the next proposition. It describes exactly the arithmetic of the practical computation.

\begin{proposition}\label{arithmetic}
Let $v_1,\dots, v_m\in\ZZ^n$ and suppose $C=\RR_+v_1+\dots+\RR_+v_m$ is a pointed cone, generated by $v_1,\dots,v_m$. Let $\delta$ be a primitive $\ZZ$-linear form on $\ZZ^n$ with $\delta(x)>0$ for all nonzero  $x\in C$. Set $P=\{x\in C:\delta(x)=1\}$ and suppose that $d=\dim P\ge 1$. As above, we set $\overline P=\conv(0,P)$.
\begin{enumerate}
\item
For $v\in C$, $v\neq 0$, one has
$$
\Vol(\overline P)=\frac{1}{\delta(v)}\sum_{F\text{ facet of }P}\Ht_{\overline F}(v)\Vol(\overline F).
$$

\item Let $F$ be a facet of $P$ and $\overline F=\conv(0,F)$ the corresponding facet of $\overline P$. Suppose that $\overline F$ is cut out from $\overline P$ by the $\ZZ$-linear form $\lambda$ with $\lambda(x)\ge 0$ for $x\in\overline P$. Let $u_1,\dots,u_{d+1}$  be a $\ZZ$-basis of $\aff(\overline P)\cap \ZZ^n$ and set $g=\gcd(\lambda(u_1),\dots,\allowbreak \lambda(u_{d+1}))$.
Then $\Ht_{\overline F}(v)=\lambda(v)/g$.

\item Suppose that $m=d+1$ and that $v_1,\dots,v_{m}$ are linearly independent.  With $u_1,\dots,\allowbreak u_{d+1}$ as in \textup{(2)}, one has
$$
\Vol(\overline P)=\frac{1}{\delta(v_1)\cdots\delta(v_{m})}|\det(T)|,
$$
where $T=(t_{ij})$ is the $m\times m$-matrix defined by $v_i=\sum_{j=1}^m t_{ij}u_j$, $i=1,\dots,m$.
\end{enumerate}
\end{proposition}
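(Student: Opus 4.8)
The plan is to treat the three parts in order, exploiting that $\overline P=\conv(0,P)$ is a pyramid with apex $0$ over the base $P$. Since $\delta(0)=0\neq 1$, the origin lies outside $\aff(P)$, so this is genuinely a pyramid, and its facets are precisely $P$ together with the sets $\overline F=\conv(0,F)$, one for each facet $F$ of $P$. The decisive feature, used throughout, is that every $\overline F$ contains $0$, so $\aff(\overline F)$ is a linear hyperplane in the linear space $A=\aff(\overline P)$ and $\Ht_{\overline F}$ restricts to an honest $\RR$-linear form on $A$. For part (1) the trick is to rescale $v$ into the base: I would set $v'=v/\delta(v)\in P\subset\overline P$ and apply Proposition \ref{decomp} to $\overline P$ at $v'$,
\[
\Vol(\overline P)=\Ht_P(v')\Vol(P)+\sum_{F}\Ht_{\overline F}(v')\Vol(\overline F).
\]
Because $v'\in P=\aff(P)\cap\overline P$, the height $\Ht_P(v')$ vanishes and the base term drops out. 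Linearity of $\Ht_{\overline F}$ then gives $\Ht_{\overline F}(v')=\Ht_{\overline F}(v)/\delta(v)$, and pulling the common factor $1/\delta(v)$ out of the sum yields the asserted formula.

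Part (2) is a matter of identifying the primitive form cutting out $\overline F$. Since $0\in\overline F$, the hyperplane $\aff(\overline F)$ in $A$ is the zero set of a primitive $\ZZ$-linear form $\mu$ on $L=\aff(\overline P)\cap\ZZ^n$, and by definition $\Ht_{\overline F}=|\mu|$ on $A$. The given form $\lambda$ vanishes on the same hyperplane and is nonnegative on $\overline P$, hence on $C=\RR_+\overline P$; thus $\lambda|_L$ is a nonnegative multiple of $\mu$. The image $\lambda(L)$ is the subgroup $g\ZZ$ of $\ZZ$ with $g=\gcd(\lambda(u_1),\dots,\lambda(u_{d+1}))$, since $L$ is generated by $u_1,\dots,u_{d+1}$, so $\mu=\lambda|_L/g$. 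Evaluating at $v\in C$ and using $\lambda(v)\ge 0$ gives $\Ht_{\overline F}(v)=\lambda(v)/g$.

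For part (3), the hypotheses force $\overline P$ to be the simplex with vertices $0$ and $w_i=v_i/\delta(v_i)$, $i=1,\dots,m$. Taking $v_0=0$ in Proposition \ref{simplex} with the basis $u_1,\dots,u_{d+1}$ of $L$, I would write $w_i=\sum_{j} s_{ij}u_j$, so that $\Vol(\overline P)=|\det(S)|$ with $S=(s_{ij})$. Since $w_i=v_i/\delta(v_i)=\sum_{j}\bigl(t_{ij}/\delta(v_i)\bigr)u_j$, the matrix $S$ equals $D^{-1}T$, where $D$ is the diagonal matrix with diagonal entries $\delta(v_1),\dots,\delta(v_m)$. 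Hence $|\det(S)|=|\det(T)|/(\delta(v_1)\cdots\delta(v_m))$, as claimed.

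I expect the main obstacle to be the careful lattice bookkeeping rather than any geometric difficulty: one must verify that $\Ht_{\overline F}$ is genuinely linear (which hinges on $0\in\overline F$, forcing the constant $\beta=0$ in the definition of lattice height), that $g$ is exactly the index of $\lambda(L)$ in $\ZZ$, and that no spurious contribution from the base facet $P$ survives in part (1). The rescaling $v\mapsto v/\delta(v)$ is the single step that simultaneously annihilates the $P$-term and produces the factor $1/\delta(v)$; once it is in place, all three parts reduce to linear algebra over the lattice $L$.
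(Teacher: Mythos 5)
Your proof is correct and follows essentially the same route as the paper's: you rescale $v$ to $v/\delta(v)\in P$ so that the base facet $P$ contributes nothing in Proposition \ref{decomp} and the factor $1/\delta(v)$ emerges from linearity of $\Ht_{\overline F}$ (valid because $0\in\overline F$), you identify the primitive form computing $\Ht_{\overline F}$ as $\lambda/g$ via $\lambda(L)=g\ZZ$, and you deduce (3) from Proposition \ref{simplex} with $v_0=0$ after scaling the $v_i$ by $1/\delta(v_i)$. The only difference is that you spell out details the paper leaves implicit, such as the pyramid facet structure of $\overline P$ and the factorization $S=D^{-1}T$ of the vertex matrix.
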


\begin{proof}
For (1) we observe that $v/\delta(v)\in P$. Therefore $\Ht_P(v/\delta(v))=0$ and we need to sum in Proposition \ref{decomp} (applied to $v/\delta(v)$) only over the facets $\neq P$ of $\overline P$. They are exactly the facets of type $\overline F$ with $F$ a facet of $P$. Since $0\in \overline F$, the function $\Ht_{\overline F}$ is linear so that $\Ht_{\overline F}(v/\delta(v))=\Ht_{\overline F}(v)/\delta(v)$.

For (2) we note that the primitive linear form on $\aff(\overline P)\cap \ZZ^n$ that computes $\Ht_{\overline F}$ is indeed $\lambda/g$.

(3) follows from Proposition \ref{simplex} after scaling $v_1,\dots,v_m$ by $1/\delta(v_1),\dots,1/\delta(v_m)$, respectively, and setting $v_0=0$.
\end{proof}

\begin{remark}\label{rem_algebra}
(a) The number $k$ of Proposition \ref{nondegerate} is called the \emph{grading denominator} by Normaliz, for good reason as the proof shows. The user can choose whether  Normaliz should compute $\Vol(P)$ or $\Vol(kP)$, together with the corresponding Euclidean volume.

(b) Suppose that $\aff(P)$ contains a lattice point. Then the Ehrhart function, the lattice point enumerator of $iP$, $i\in \NN$, is a quasipolynomial $q(i)$ of degree $d=\dim P$,  with constant leading  coefficient $\Vol(P)/d!$. If the grading denominator is $k>1$, then the components $q^{(j)}$ of the quasipolynomial are zero for $j\not\equiv 0 \pod k$, but the components  $q^{(j)}$ for $j\equiv 0 \pod k$  again have constant leading coefficient $\Vol(P)/d!$. By the similarity with (or interpretation as) a Hilbert function it is justified to call $\Vol(P)$ the \emph{multiplicity} of $P$. This extends the standard usage of ``multiplicity'' for lattice polytopes (see \cite[Section 6.E]{BG}).

(c) Normaliz contains all the linear algebra over $\ZZ$ that is necessary for the computations of Proposition \ref{arithmetic}. There are  however aspects that deserve mentioning. For the linear algebra operations mentioned below, Normaliz uses the transformation of integer matrices to row echelon form by the Gaussian algorithm over $\ZZ$, which in its turn is based on the Euclidean algorithm for gcd computations. The determinant of a full rank square matrix is then the product of the diagonal elements of its rowechelon transform.

The basis $u_1,\dots,u_{d+1}$ in  Proposition \ref{arithmetic}(2) is obtained by saturating the sublattice of $\ZZ^n$ that is generated by $v_1,\dots,v_m$. Since $m$ can be extremely large, it saves a substantial amount of time to compute the saturation from a small subset that generates a sublattice of the same rank. Therefore Normaliz tries a random selection that is increased if the rank should not yet suffice.

The saturation itself is computed by a twofold orthogonalization. Let $L$ be a sublattice of $\ZZ^n$. Then we compute the orthogonal sublattice
$$
L^\perp=\{x\in\ZZ^n: \langle x,y \rangle=0 \text{ for all } y\in L  \}.
$$
The saturation of $L$ is given by $L^{\perp\perp}$, as it not hard to see. The computation of $L^\perp$ amounts to solving a homogeneous system of linear equations, and in its turn this task is reduced to the computation of an Hermite normal form \cite[2.4.3]{Cohen}.

The computation of the primitive linear form $\lambda$ amounts to solving a homogeneous system of linear equations as well.

It is difficult to control the size of intermediate results in linear algebra over $\ZZ$. Normaliz uses a twofold strategy to deal with overflows if the user tries a computation with 64 bit integers. The linear algebra operations are constantly monitored for overflows, and if such an overflow occurs for an intermediate result, the whole computation, for example the computation of an Hermite normal form, is repeated in GMP arithmetic. If an overflow occurs in a final result, then Normaliz starts from scratch in GMP arithmetic. Of the examples considered in Section \ref{samples} only one (par-24) uses GMP arithmetic.

The rational numbers that are computed as the volumes of rational polytopes often have very large numerators and denominators. They must not be interpreted as consequences of overflows of 64 bit integers in the linear algebra operations. They are inevitable because we must divide by the products of the degrees of the generators in Proposition \ref{arithmetic}(1) and (3) and further increase by the addition of these fractions.

\end{remark}

\section{Descent sytems}\label{system}

The discussion in the previous section has made it clear that we should compute $\Vol(\overline P)$, given a rational polytope in homogenized coordinates. This is automatically taken care of by the use of Proposition \ref{arithmetic}. All faces of $\overline P$ that come up in the recursive use of Equation \eqref{Vol_dec} are of type $\overline F$ where $F$ is a face of $P$. Therefore we can work directly with $P$ in the combinatorial description of the implementation. To simplify notation in this section, we assume that $\delta(v_i)=1$ for all $i$ (with the notation of Proposition \ref{arithmetic}). Otherwise $\Ht_{\dots}(v_i)$ must be divided by $\delta(v_i)$ whenever it appears. Moreover, we identify $\Vol(P)$ and $\Vol(\overline P)$.

As an example let us discuss a $3$-dimensional cube (or any polytope that is combinatorially equivalent to it).
\begin{figure}[hbt]
	\begin{center}
		\begin{tikzpicture}[y  = {(-0.4cm,-0.5cm)},
		z  = {(0.9659cm,-0.25882cm)},
		x  = {(0cm,0.8cm)},
		scale = 2]
		\draw (0,0,0) -- (1,0,0) -- (1,1,0) -- (0,1,0) --cycle node at (-0.1,1.1,-0.1){$v$};
		\draw (0,0,1) -- (1,0,1) -- (1,1,1) -- (0,1,1) --cycle;
		\draw (0,0,0) -- (1,0,0) -- (1,0,1) -- (0,0,1) --cycle node at (1.1,-0.1,1.1){$w$};
		\draw (0,1,0) -- (1,1,0) -- (1,1,1) -- (0,1,1) --cycle;
		\draw[thick, color=red] (1,0,1) -- (1,1,1);
		\draw[thick, color=red] (1,0,1) -- (1,0,0);
		\draw[thick, color=red] (1,0,1) -- (0,0,1);
		\draw node at (1.1,1.1,-0.1){$v_T$};
		\draw node at (-0.1,1.1,1.1){$v_R$};
		\draw node at (0.0,0.2,0.2){$v_B$};
		\end{tikzpicture}
		\caption{A $3$-dimensional cube}\label{cube}
			
	\end{center}
\end{figure}
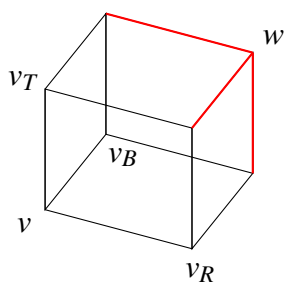
All vertices of $P$ itself have the same number of opposite facets. Let us choose $v$ as in Figure \ref{cube}. Then the opposite facets of $v$ are $T$, $B$ and $R$, namely the top, the back and the right side of $P$. Thus
$$
\Vol(P)=\Ht_T(v)\Vol(T)+\Ht_B(v)\Vol(B)+\Ht_R(v)\Vol(R).
$$
A moment of thought shows that the best choice of the vertices of $T$, $B$ and $R$ are $v_T$, $v_B$ and $v_R$ respectively: only $3$ edges appear as opposite facets, namely the $3$ edges emanating from the vertex $w$ that is antipodal to $v$. The edges are simplices so that we can compute their volumes as determinants, and no further recursion is necessary. Altogether we have constructed a $3$-level system $\cD=(\cD_0,\cD_1,\cD_2)$
$$
\cD_0=\{P\},\ \cD_1=\{T,B,R\},\  \cD_2=\{ T\cap B, T\cap R, B\cap R\}
$$
of faces and distinguished vertices $v,v_T,v_B,v_R$ in the nonsimplex faces that allow the recursive computation of $\Vol(P)$.

\begin{definition}
A \emph{descent system} $\cD=(\cD_i: i=0,\dots,d-1)$  for a polytope $P$ of dimension $d$ is a family of sets $\cD_i$ of faces $F$ together with a map $F\mapsto v(F)$ assigning a vertex $v(F)\in F$ to every nonsimplex $F\in\bigcup_i \cD_i$ such that the following conditions are satisfied for all $i$:
\begin{enumerate}
\item  every $F\in\cD_i$ is a $(d-i)$-dimensional face of $P$;
\item if $G$ is a facet of the nonsimplex face $F\in\cD_{i}$ and $v(F)\notin G$, then $G\in\cD_{i+1}$;
\item if $G\in\cD_{i+1}$, then there exists $F\in \cD_i$ such that $G$ is a facet of $F$ not containing $v(F)$.
\end{enumerate}
(There is no need to introduce $\cD_d$ since all edges of $P$ are simplices.)
\end{definition}

It is immediately clear that a memoryless depth-first recursion would be a bad choice: it does not take into account that lower dimensional faces appear in a large number of higher dimensional ones, and would therefore be computed over and over again. (Compare the numbers $\#\cD$ and $\#\Sigma_{\cF}$ in Table \ref{numerical_data}.)

We compute the descent system by generation:  $\cD_{i+1}$ is computed from $\cD_i$, and if $\cD_i=\emptyset$, the computation is complete; otherwise $\cD_i$ is processed itself in a \emph{parallelized} loop. It is enough to store only the consecutive layers $\cD_i$ and $\cD_{i+1}$ at any time since the recursive application of Equation \eqref{Vol_dec} can be replaced by a forward transfer of the accumulated height information by means of a weight $w(F)$ that is assigned to each face in the descent system. The weight of $P$ is $1$, and the total volume $\Vol(P)$, initially set to $0$, is accumulated step by step. For each face $F\in\cD_{i}$ we perform the following operations:
\begin{enumerate}
\item Decide whether $F$ is a simplex; if so, $w(F)\Vol(F)$ is added to $\Vol(P)$, and we are done with $F$.
\item Otherwise we must find the facets $G$ of $F$,
\item select the vertex $v=v(F)$ (according to a rule explained below),
\item for each facet $G$ of $F$ not containing $v$
\begin{enumerate}
\item compute $\Ht_G(v)$,
\item insert $G$ with $w(G)=0$ into $\cD_{i+1}$ if it has not yet been found by an already processed face $F'\in\cD_i$,
\item increase $w(G)$ by $w(F)\Ht_G(v)$.
\end{enumerate}
\end{enumerate}
The implementation deviates from this description in the treatment of simplex faces; see Remark \ref{modi}(e); however, the algorithm as described makes the theoretical analysis easier.

\begin{proposition}
The algorithm computes $\Vol(P)$ correctly.
\end{proposition}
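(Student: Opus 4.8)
The plan is to show, by downward induction on the layers $\cD_i$, that after the algorithm has processed all faces of dimension $>d-i$, the weight $w(F)$ attached to each $F\in\cD_i$ equals the total coefficient with which $\Vol(F)$ contributes to $\Vol(P)$ in the fully expanded recursion of Proposition \ref{decomp}. Concretely, I would define for each face $F$ in the descent system the quantity
\[
\omega(F)=\sum_{\text{chains}} \prod \Ht,
\]
where the sum runs over all chains $P=F_0\supset F_1\supset\dots\supset F_k=F$ in which each $F_{j+1}$ is a facet of $F_j$ not containing the distinguished vertex $v(F_j)$, and each summand is the product of the heights $\Ht_{F_{j+1}}(v(F_j))$ along the chain. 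The claim to prove is that the final value of $w(F)$ computed by the algorithm equals $\omega(F)$, and that $\Vol(P)=\sum_{F\text{ simplex in }\cD}\omega(F)\Vol(F)$.

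The first step is to verify the invariant for the weights. The base case is $w(P)=\omega(P)=1$, corresponding to the empty chain. For the inductive step I would fix $G\in\cD_{i+1}$ and trace through the loop: $G$ receives a contribution $w(F)\Ht_G(v(F))$ from every face $F\in\cD_i$ having $G$ as a facet with $v(F)\notin G$ (step (4c)), and by conditions (2) and (3) in the definition of descent system these are exactly the faces $F$ that can occur as the predecessor of $G$ in a chain. Hence, once every such $F$ has been processed and already carries its correct accumulated value $w(F)=\omega(F)$, the accumulated weight on $G$ is
\[
\sum_{F}\omega(F)\,\Ht_G(v(F))=\omega(G),
\]
since prepending the step $F\supset G$ to each chain ending at $F$ produces precisely the chains ending at $G$. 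The parallelized layer-by-layer processing guarantees that all of $\cD_i$ is finished before any weight on $\cD_{i+1}$ is read, so the induction is well founded.

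The second step is to connect this weight invariant to the volume. Applying Proposition \ref{decomp} repeatedly with the chosen vertex $v(F)$ at each face gives, after full expansion,
\[
\Vol(P)=\sum_{F}\omega(F)\,\Vol(F),
\]
where the sum is over the terminal faces of all chains. The recursion terminates precisely at simplex faces (a single point being the trivial simplex with $\Vol=1$), because step (1) of the algorithm stops the descent exactly there, and condition (2) only pushes facets forward for \emph{nonsimplex} faces. Thus the terminal faces are exactly the simplices that get their volume added in step (1), each multiplied by the correct $w(F)=\omega(F)$, and summing these contributions reproduces $\Vol(P)$.

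The main obstacle I anticipate is purely bookkeeping: making rigorous that conditions (2) and (3) of the descent system set up a bijection between (chain, predecessor) pairs and the actual increments performed in step (4), so that no chain is counted twice and none is omitted. In particular one must check that a face $G$ can be reached as a non-$v(F)$ facet from several different $F$, and that the algorithm's deduplication in step (4b) correctly merges these without losing any of the additive contributions in (4c). Once the chain/increment correspondence is pinned down, the volume identity follows immediately from iterating Proposition \ref{decomp}, so the geometric content is entirely carried by that proposition and the novelty here is only the reorganization of the recursion into the accumulating weight function.
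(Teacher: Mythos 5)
Your proposal is correct and takes essentially the same approach as the paper: your chains are exactly the paper's flags, your $\omega(F)$ is precisely the flag-sum of height products that the paper identifies with the accumulated weight $w(F)$, and your layer-by-layer induction combined with the iterated expansion of Proposition \ref{decomp} is the paper's induction on $k$, merely written out in more detail. The bookkeeping point you flag (that conditions (2) and (3) make the increments in step (4) match the chain predecessors bijectively) is exactly what the paper's terse ``this weight is exactly $w(F)$ as computed by the algorithm'' silently relies on, so there is no gap.
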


\begin{proof}
The only question could be whether the weight $w(F)$ is computed correctly. Let us say that the sequence $\cF=(F_0,,\dots,F_k)$ is a \emph{flag} in $\cD$ if $F_i\in \cD_i$ for all $i$ (and therefore $F_0=P$) and $F_{i+1}$ is a face of $F_i$ not containing $v(F_i)$.

Let $F\in\cD_k$ be a face of $P$. It follows from Equation \eqref{Vol_dec} that $\Vol(F)$ contributes to the total volume $\Vol(P)$ with the weight
$$
\sum_{\cF } \Ht_{F_{1}}(v_0)\cdots \Ht_{F_k}(v_{k-1})
$$
where $\cF$ runs through all flags $(F_0,,\dots,F_{k-1},F)$ with  $v_i=v(F_i)$, $i=0,\dots,k-1$. This weight is exactly $w(F)$ as computed by the algorithm. The proposition follows immediately by induction on $k$.
\end{proof}

Polytopes $P$ are usually given as the convex hull of their vertices (V-description) or as the intersection of halfspaces (H-description), where the hyperplanes bounding the halfspaces define the facets of $P$. It is clear from Equation \eqref{Vol_dec} that we need both descriptions. Regardless of which of them defines $P$, one must compute the other one. This is covered by the basic functionality of Normaliz (and many other packages). Once facets and vertices are known, one can compute the incidence matrix  of facets and vertices. It is the basis of all combinatorial computations in the face lattice of $P$.

Since the number of faces in the descent system is potentially very large, the combinatorial details of the implementation are critical. We have tried to find a balance between computation time and memory usage. The main question is what to use as the signature of a face $F$ in the descent system. Since the descent algorithm is meant for polytopes with a moderate number of facets and potentially many vertices, we use the set of facets of $P$ that contain $F$ and not the vertices of $F$. The set of facets is represented by a bitset.

For the ``local'' computations within $F\in\cD_i$ the faces of $F$ are identified by their vertices. These local computations consist of several steps: (i) selecting the vertices of $F$ as those vertices of $P$ that belong to all facets of $P$ containing $F$, (ii) finding the facets of $F$ by intersecting $F$ with the facets of $P$ not containing $F$, (iii) selecting the vertex $v(F)$, computing the heights of $v(F)$ over the facets of $F$ not containing it, and finally (iv) pushing these facets, heights and $w(F)$ to $\cD_{i+1}$. We describe step (ii) in more detail in Remark \ref{modi}(a) below.

Among all candidates for $v(F)$ we choose a vertex $v$ of $F$ that (i) minimizes the set of ``opposite'' facets of $F$, and (ii) then minimizes the number of faces  $F'\in \cD_i$ containing $v$. While rule (i) is an obvious choice, rule (ii) tries to take $v$ as ``exterior'' as possible in the set $\bigcup_{F'\in\cD_i} F'$, so that the facets sent to $\cD_{i+1}$ share as many subfacets as possible. (The choice of $v_T,v_T,v_R$ for the cube illustrates this rule.) The introduction of rule (ii) has reduced the size of the descent systems typically by $20\%$.

\begin{remark}\label{modi}
(a) Every face of a polytope $P$ is the intersection of a set of support hyperplanes $H$ of $P$ with $P$. This implies for a given face $F$ that each facet $F'$ of $F$ is obtained as the intersection $F'=F\cap  H'$ with a support hyperplane $H'$ of $P$, $F\not\subset H'$. After the vertex set of $F$ has been computed as the intersection of all $H\supset F$, we compute the intersections $F\cap H$, $F\not\subset H$. This operation yields a set of faces $\cF$ of $F$. In general, $\cF$ contains also non-facets of $F$, and a face $F''\in\cF$ can be cut out by several hyperplanes $H$.

Despite of these observations, finding the facets $F'$ of $F$ in $\cF$ is a purely set theoretic task: we must find the maximal elements in $\cF$. These are exactly the facets of $F$, and each such facet $F'$ satisfies $\dim F'=\dim F-1$.

All the operations just mentioned use only the facet-vertex incidence vectors of $P$. There is no need to compute dimensions of faces which would be an alternative for selecting the facets among the elements of $\cF$.

(b) Normaliz tests whether the polytope $P$ is simple. For simple polytopes the situation is simpler (!): If $F$ is a face of $P$ and $H\not\supset F $ a support hyperplane of $P$, then either $F\cap H=\emptyset$ or $G=F\cap H$ is a facet of $F$. Moreover, $H$ is the only support hyperplane of $P$ that cuts out $G$ from $F$.

(c) The vertex sets of $G$ are known for the facets $G$ of $F\in\cD_i$ that go into $\cD_{i+1}$. Storing them with $G$ would accelerate the computation somewhat, but would require considerably more memory, making computations for polytopes with large vertex sets impossible.

(d) One could modify rule (i) for the selection of $v(F)$ by counting only nonsimplex facets that contain $v$. Experiments have shown that this is not a good choice.

(e) Instead of sending simplex facets of $F\in \cD_i$ into $\cD_{i+1}$ the implementation computes them directly. This has almost no influence on computation time in general, but reduces memory usage somewhat. For simplicial polytopes the gain is however tremendous.

(f) If the number of faces in $\cD_i$ exceeds one million, $\cD_i$ is processed in blocks of this size, and each block is freed when it is finished. This reduces memory usage further.
\end{remark}

\begin{remark}\label{complexity}
We give an overview of the complexity of the descent algorithm. It is proportional to the total number $\#\cD=\sum_i \#\cD_i$. With $h$ denoting the number of facets of $P$ and $V$ the number of its vertices, the bit operations \emph{per face} $F$ can be estimated as follows:
\begin{enumerate}

\item $O(hV)$ bit operations for finding the vertices of $F$, the candidates for the facets of $F$, and selecting the vertex $v(F)$,

\item if $P$ is not simple, $O(h^2V)$ bit operations for selecting the facets among the faces found in (2),

\item $O(h^2\log \#\cD)$ bit operations for the insertion of the facets of $F\in\cD_i$ into $\cD_{i+1}$.
\end{enumerate}

These are rough estimates that do not take into account that many bit operations are implemented as operations of bit sets represented by a vector of words of size $64$.

The  linear algebra methods over $\ZZ$ are described in Remark \ref{rem_algebra}. Their complexity is very difficult to estimate. Even authoritative sources such as Cohen \cite{Cohen} do not contain bounds.
\end{remark}

The role of the simplices in the descent system raises the suspicion that the algorithm implicitly uses a triangulation or at least a decomposition with similar properties. This is indeed the case. For each complete flag $\cF=(F_0,\dots,F_k)$, in which $F_k$ is necessarily a simplex, set
$$
\Sigma_\cF=\conv(v_0,\dots,v_{k-1},F_k),\qquad v_i=v(F_i).
$$
By the choice of $v_0,\dots,v_{k-1}$ and $F_k$ this set is indeed a $d$-simplex, and one has
$$
P=\bigcup_{\cF} \Sigma_\cF.
$$
Moreover, the relativ) interiors of the simplices $\Sigma_\cF$ are pairwise disjoint, and this property is good enough for volume computations. In general $\Sigma_\cF\cap \Sigma_{\cF'}$ is not a face of both simplices so that the decomposition is not a triangulation in the strong combinatorial sense. If a true triangulation is desired, one has to fix an order of the vertices $v_1,\dots,v_m$ of $P$ beforehand, and for every nonsimplex face $F$ select $v(F)$ as the first vertex that belongs to $F$. The triangulation constructed in this way is reverse-lexicographic in the sense of the Sturmfels correspondence or pulling in combinatorial terminology (for example, see \cite[Section 7.C]{BG}) .

The primal algorithm of Normaliz that builds a cone (over a polytope) incrementally by successively adding generators produces a lexicographic (or placing) triangulation. Its construction is discussed in \cite{BIS}. Lexicographic triangulations have many advantages and go very well with the Fourier-Motzkin elimination in convex hull computations.

However, if a cone or polytope is given by inequalities, then the reverse-lexicographic approach is more natural. Future versions of Normaliz may use it as well for the computations of triangulations. Nevertheless note that its success in volume computation is based on the fact that the number $w(F)$ captures the relevant information of the set of all flags ending in $F$. We will illustrate this effect by several sample computations in the next section.

\section{Sample computations}\label{samples}

\subsection{The test polytopes}
We demonstrate the power of the descent algorithm by some sample calculations. The following polytopes are used:

\begin{enumerate}
\item \emph{\ttt{Strict Borda}} is the polytope underlying the computation of the probability of the strict Borda paradox in social choice; see \cite{BIS2} for the details.

\item \emph{\ttt{Condorcet}} is one of the polytopes that appears in relation with the Condorcet's other paradox. It is discussed in Section \ref{4_rules}, where it is labeled as $\cQ_1$.

\item \emph{\ttt{4 rules}} comes from social choice as well. Again, it is discussed in Section \ref{4_rules}.

\item \emph{\ttt{8x8-score}} represents the monoid
    of ``$8\times 8$ ordered score sheets'' and was discussed in \cite{IM} .

\item \emph{\ttt{6x6-magic}} represents the monoid
    of ``$6\times 6$ magic squares'', that is the monoid of squares of size $6\times 6$ filled with
    nonnegative integers such that all rows,
    columns and the two main diagonals have the same sum called the ``magic
    constant''.

\item \emph{\ttt{d-par}} is a parallelotope of dimension $d$ produced as a test example.

\item \emph{\ttt{d-cube}} is the unit cube of dimension $d$.

\item \emph{\ttt{bool mod $S_5$}} represents the boolean model for
    the symmetric group $S_5$ and \emph{\ttt{lin ord $S_6$}} is the linear order polytope for the symmetric group $S_6$; they belong to the area of
    statistical ranking, see \cite{StW} for example.

\item \emph{\ttt{A443}} and \emph{\ttt{A543}} are monoids defined by the
    $2$-dimensional  marginal distributions of the $3$-dimensional
    contingency tables of sizes $4\times4\times3$ and
    $5\times4\times3$. In the
    classification of Ohsugi and Hibi \cite{OH}
    are listed as open cases and were
    closed in \cite{BHIKS}.

\item \emph{\ttt{cyclo60}} represents the cyclotomic monoid of
    order $60$ and was discussed by Beck and Ho\c{s}ten in
    \cite{BeH}.

\item \emph{\ttt{d-cross}} is the unit cross polytope of dimension $d$ spanned by the unit vectors $\pm e_i$, $i=1,\dots,d$.
\end{enumerate}

The first $9$ polytopes in Table \ref{numerical_data} are defined by inequalities and equations whereas the other $8$ are lattice polytopes given by their vertices. The computation times in the ``primal'' and ``descent'' columns  of Table \ref{calculation_times} include the conversion from one representation to the other; for those in the ``special'' column it is superfluous.  All computations can be (and were) done in 64 bit arithmetic, with the exception of \ttt{24-par} that needs GMP integers.

\begin{table}[hbt]
\begin{tabular}{|r|r|r|r|r|r|r|}
\hline
\strut                 & edim & $\#$ supp & $\#$ vert  &  $\# \cD$   & $\# \det$      & $\# \Sigma_\cF$ \\
\midrule[1.2pt]
\strut \ttt{strict Borda}   &  24       & 33        & 6,363      & 4,407,824   &   901,955   & $2\cdot 10^{10}$    \\
\hline
\strut \ttt{Condorcet}      &  24       & 33        & 51,168     & 82,524,473  &  22,222,231 & $2.7\cdot 10^{12}$  \\
\hline
\strut \ttt{4 rules}        &  24       & 36        & 233,644    & 652,216,133 & 177,513,245 & $17.9\cdot 10^{12}$  \\
\hline
\strut \ttt{8x8-score}      &  56       & 63        & 6,725,600  & 6,725,550   & 343         & $4.2\cdot 10^{40}$  \\
\hline
\strut \ttt{6x6-magic}      &  24       & 36        & 97,548     & 494,867,792 & 113,068,158 & $31.8\cdot 10^{12}$ \\
\hline
\strut \ttt{20-par}         &  21       & 40        & $2^{20}$   & $2^{20}-21$ & 380         & $20!$ \\
\hline
\strut \ttt{24-par}         &  25       & 48        & $2^{24}$   & $2^{24}-25$ & 552         & 24! \\
\hline
\strut \ttt{20-cube}        &  21       & 40        & $2^{20}$   & $2^{20}-21$ & 380         & $20!$ \\
\hline
\strut \ttt{24-cube}        &  25       & 48        & $2^{24}$   & $2^{24}-25$ & 552         & $24!$\\
\midrule[1.2pt]
\strut \ttt{bool mod $S_5$} &  27       & 235       & 120        & 14,541,872  & 334,154     & $2\cdot 10^{10}$  \\
\hline
\strut \ttt{lin ord $S_6$}  &  16       & 910       & 720        & 19,012,391  & 2,133,900   & $5.8\cdot 10^9$ \\
\hline
\strut \ttt{A443}           &  30       & 4948      & 48         & 204,363     & 22,334      & 2,654,224 \\
\hline
\strut \ttt{A543}           &  36       & 29387     & 60         & 3,049,328   & 183,519     & $10^8$ \\
\hline
\strut \ttt{cyclo60}        &  17       & 656100    & 60         & 1,712,752   & 149,253     & 9,188,100 \\
\hline
\strut \ttt{20-cross}       &  21       & $2^{20}$  & 40         & $1$         & $2^{19}$    & $2^{19}$ \\
\hline
\strut \ttt{24-cross}       &  25       & $2^{24}$  & 48         & $1$         & $2^{23}$    & $2^{23}$ \\
\hline
\strut \ttt{28-cross}       &  29       & $2^{28}$  & 56         & $1$         & $2^{27}$    & $2^{27}$ \\
\hline
\end{tabular}
\vspace*{1ex}
\caption{Numerical data} \label{numerical_data}
\end{table}

\begin{table}[hbt]
\begin{tabular}{|r|r|r|r|r|r|r|}
\hline
\strut                 & \multicolumn{3}{c|}{RAM in GB} & \multicolumn{3}{c|}{time }   \\
\cline{2-7}
\strut                 &  \multicolumn{1}{c|}{primal}&  \multicolumn{1}{c|}{special}  &  \multicolumn{1}{c|}{descent} & \multicolumn{1}{c|}{primal}&  \multicolumn{1}{c|}{special} &  \multicolumn{1}{c|}{descent} \\
\midrule[1.2pt]
\strut \ttt{strict Borda}   &  1.03   &         & 0.36              &  5:30:27 h  &           & 25.37 s     \\
\hline
\strut \ttt{Condorcet}      &         &         & 4.26              &             &           & 14:42 m  \\
\hline
\strut \ttt{4 rules}        &         &         & 34.39             &             &           & 4:50:52 h \\
\hline
\strut \ttt{8x8-score}      &         & 9.92    & 12.78             &             & 6:02 m    & 4:38:12 h \\
\hline
\strut \ttt{6x6-magic}      &         &         & 22.77             &             &           & 1:59:43 h \\
\hline
\strut \ttt{20-par}         &         & $<0.01$ & 1.11              &             & 0.06 s    & 2:35 m\\
\hline
\strut \ttt{24-par}         &         & $<0.01$ & 66.67             &             & 0.10  s   & 12:39:49 h \\
\hline
\strut \ttt{20-cube}        &         & $<0.01$ & 1.11              &             & $<0.01$ s & 2:26 m \\
\hline
\strut \ttt{24-cube}        &         & $<0.01$ & 20.94             &             & $<0.01$ s & 11:57:01 h \\
\midrule[1.2pt]
\strut \ttt{bool mod $S_5$} & 1.10    &         & 0.69              &  1:10:07 h  &           & 2:28 m  \\
\hline
\strut \ttt{lin ord $S_6$}  & 0.84    &         & 1.96              &  19:03 m    &           & 2:22 m \\
\hline
\strut \ttt{A443}           & 0.68    &         & 0.05              &   1.55 s    &           & 18.35 s  \\
\hline
\strut \ttt{A543}           & 0.94    &         & 1.83              &  30.06 s    &           & 26:26 m \\
\hline
\strut \ttt{cyclo60}        & 1.30    &         & 96.37             &  40.2 s     &           & 3:13:57 h \\
\hline
\strut \ttt{20-cross}       & 1.96    &         & 1.07              &  12.90 s    &           &  11.79 s    \\
\hline
\strut \ttt{24-cross}       & 14.79   &         & 22.87             &  2:20 m     &           & 3:56 m  \\
\hline
\strut \ttt{28-cross}       & 203.43  &         & 354.47            &  57:07 m    &           & 1:47:08 h  \\
\hline

\end{tabular}
\vspace*{1ex}
\caption{Memory usage and times for parallelized volume calculations} \label{calculation_times}
\end{table}

In Table \ref{numerical_data} $\textup{edim}$ is the dimension of the space in which the polytope is computed -- it is $\dim P+1$. The number of vertices is denoted by $\# \textup{vert}$  and that of support hyperplanes by $\# \textup{supp}$. Moreover, $\# \cD$ is the total size of the descent system, $\#\det$ the number of determinants computed by the descent algorithm, and $\# \Sigma_\cF$ the number of simplices in a decomposition that would be produced by the algorithm. (The triangulations used by the primal algorithm usually have somewhat different sizes, and also the number of computed determinants is most often different.)

\subsection{Parallelized volume computations} The computation times in Table \ref{calculation_times} are ``wall clock times'' taken on a Dell R640 system with two Intel\texttrademark Xeon\texttrademark Gold 6152 (a total of $44$ cores)   using $20$ parallel threads (of the maximum of $88$). The efficiency of the parallelization is discussed below. We define it as the quotient
$$
\frac{T_1}{tT_t}
$$
where $T_1$ is the time of the strictly sequential computation, $t$ the number of threads and $T_t$ the time of parallel computation with $t$ threads.
The times listed are for the descent algorithm discussed in this paper, the Normaliz primal algorithm using lexicographic triangulations, and special algorithms that can be applied in some cases; see Remark \ref{comp_remarks}(b) and (e).

For the primal algorithm a missing entry in Table \ref{calculation_times} means that the computation is not doable in a reasonable amount of time since the triangulation would have $>10^{12}$ simplices. A missing entry in the `special'' column indicates that Normaliz has no special method for the particular example.

\begin{remark}\label{comp_remarks}
(a) A profiler run of the example \ttt{Strict Borda}, which we consider as a typical application of the descent algorithm, shows that $\approx 43\%$ of the computation time are spent on linear algebra, whereas the bitset operations take $\approx 26 \%$. The rest goes into preparations and administration.

(b) Among the polytopes calculated by the primal algorithm, \ttt{strict Borda} is by no means the biggest (see \cite{BIS} for much larger computations). However, among the polytopes calculated for \cite{BIS2} it is the largest since most others can be simplified by symmetrization (see \cite{BS}). Symmetrization can be applied very efficiently to \ttt{8x8-score}, and this is the special algorithm used for it in addition to descent.  For it, the triangulation is approximately $10^{34}$ times as large as the descent system.

(c) Despite of the special algorithm for parallelotopes described in (e), we have run the descent algorithms on some of them since the results are predictable and can therefore be used as tests for correctness. Moreover, they are prototypes of simple polytopes with very few facets, but a large number of vertices.

The polytope \ttt{20-par} is an affine image of the \ttt{20-cube}. It is not hard to see that the selection rule for vertices in non-simplex faces produces the descent system $\cD$ consisting exactly of the faces containing the vertex $w$ antipodal to the vertex $v(P)$, as illustrated for the 3-cube by Figure \ref{cube}. The simplex faces are the lines emanating from $w$. In this case the algorithm implicitly produces an affine image of the Knudsen-Mumford triangulation determined by the root system $A_{20}$ (for example, see \cite[Section 3.A]{BG}). Parallelotopes profit from the special handling of simple polytopes; see Remark \ref{modi}(b).

(d) Analogous remarks apply to \ttt{24-cube} and \ttt{24-par}. If one compares the computation times of \ttt{24-cube} with its trivial arithmetic to that of \ttt{24-par} with substantially more complicated arithmetic, it becomes clear that the bulk of the computation time for these polytopes goes into the (identical) combinatorics. The handling of the very long bitsets representing the vertices in a face is the bottleneck in these computations, as becomes apparent also from \ttt{8x8-score}.

(e) As the column ``special'' shows, there is a tremendously faster approach to the parallelotopes: if $P$ is a $d$-parallelotope, then $\Vol(P)=d!\Vol(\sigma)$ where $\sigma$ is a ``corner'' simplex of $P$ spanned by a vertex and its neighbors. If there should be any doubt: this follows from the transformation rule for volumes, once it has been observed for the unit cube.

The recognition of parallelotopes was added to Normaliz for the computation of lattice points in such polytopes, as they appear in numerical mathematics; see  Kacwin, Oetters\-hagen and Ullrich \cite{KOU}. One could of course add a recognizer for cross $d$-polytopes as well. Again a single simplex would be sufficient: $\Vol(P)=2^{d-1}\Vol(\sigma)$ for every simplex $\sigma$ spanned by a vertex and an opposite facet. (It is enough to consider the unit cross polytope.)

(f) For the polytopes with a huge number of facets or vertices the transfer of these data between the components of the Normaliz system of course takes its toll.

(g) For the polytopes defined by vertices the primal algorithm is usually more efficient, as shown by several of the last $8$ polytopes. This was to be expected for those with a large number of non-simplex facets, but for the simplicial cross polytopes the difference is small.

The number of facets is moderate for \ttt{bool mod $S_5$} and \ttt{lin ord $S_6$}; nevertheless it came as a surprise that the descent algorithm is significantly faster than the primal algorithm.

The cross polytopes are a class for which exact computation seems to be faster than probabilistic methods. The computation time for \ttt{18-cross} reported in \cite[Table 1]{Emiris} is much higher than ours for \ttt{20-cross}. This is of course also true for parallelotopes if one uses the special approach explained in (e) above.
\end{remark}

Table \ref{para} documents the efficiency of parallelization on two different systems, the Dell R640 mentioned above and another system equipped with $2$ Intel\texttrademark Xeon\texttrademark  E5-2660 at 2.20GHz (a total of 16 cores and 32 threads). The test example is \ttt{Condorcet}. The computation times for a single thread are $213$ minutes on the R640 and $370$ minutes on system 2. Until $16$ threads the efficiency of parallelization on both systems is almost equal and very acceptable.
\begin{table}[hbt]
\begin{tabular}{|c|r|r|r|r|r|r|}
\hline
\strut $\#$ threads & 1 & 2 & 4 & 8 & 16 & 32 \\
\hline
\strut R640      & 100 & 94 & 89 & 84 & 77 & 53 \\
\hline
\strut System 2 & 100 & 98 & 98 & 94 & 81 & 43 \\
\hline
\end{tabular}
\vspace*{1ex}
\caption{Efficiency of parallelization in \%}\label{para}
\end{table}

\subsection{Normaliz vs. vinci}

\begin{table}[hbt]
\scalebox{0.85}{
	\begin{tabular}{|r|r|r|r|r|r|r|r|}
		\hline
		\strut                 & \multicolumn{3}{c|}{Normaliz 1x} & \multicolumn{4}{c|}{Vinci }   \\
		\cline{2-8}
		\strut                 &  \multicolumn{1}{c|}{-s} &  \multicolumn{1}{c|}{primal}  &  \multicolumn{1}{c|}{descent} & \multicolumn{1}{c|}{cdd or \textit{lrs}} & \multicolumn{1}{c|}{rlass}&  \multicolumn{1}{c|}{hot} &  \multicolumn{1}{c|}{rch} \\
		\midrule[1.2pt]
		\strut \ttt{strict Borda}   & 0.99 s  &  50:15:12 h     & 4:33 m        & 2.38 s      &  1:25 m     & 1:18 m      & 28:32:03 h \\
		\hline
		\strut \ttt{Condorcet}      & 1.84 s  &  T              & 4:18:56 h     & 26.83 s     &  R          & 50:05 m     & T \\
		\hline
		\strut \ttt{4 rules}        & 6.76 s  &  T              & 85:38:56 h    & 9:49 m      &  R          & 7:54:26 h   & T \\
		\hline
		\strut \ttt{8x8-score}      & 5:48 m  &  T              & 95:09:03 h    & \textit{9:54} m      &  E          & T           & T \\
		\hline
		\strut \ttt{6x6-magic}      & 6.13 s  &  T              & 31:05:16 h    & 4:07 m      &  E          & LD          & T \\
		\hline
		\strut \ttt{20-par}         & 12.91 s &  T              & 44:29 m       & 2:26:38 h   &  1:53 m     & R           & T \\
		\hline
		\strut \ttt{24-par}         & 11:33 s &  T              & 216:09:45 h   & \textit{6:30} m      &  R          & T           & T \\
		\hline
		\strut \ttt{20-cube}        & 7.29 s  &  T              & 42:55 m       & 3:20:38 h   &  33.47 s    & R           & T \\
		\hline
		\strut \ttt{24-cube}        & 2:40 m  &  T              & 169:47:10 h   & \textit{6:38} m      &  R          & T           & T \\
		\midrule[1.2pt]
		\strut \ttt{bool mod $S_5$} & 0.18 s  & 5:50:36 h       & 32:57 m       & 1.69 s      &  T          & LD          & LD   \\
		\hline
		\strut \ttt{lin ord $S_6$}  & 30.98 s & 1:48:36 h       & 40:05 m       & 2:33 m      &  H          & 5:15 m      & 59:31:08 h \\
		\hline
		\strut \ttt{A443}           & 0.46 s  & 7.16 s          & 4:10 m        & 1.00 s      &  H          & LD          & LD   \\
		\hline
		\strut \ttt{A543}           & 13.65 s & 4:55 m          & 12:14:55 h    & 2:20 m      &  H          & LD          & LD   \\
		\hline
		\strut \ttt{cyclo60}        & 1:32 m  & 2:20 m          & 89:09:53 h    & 35:21:10 h  &  H          & 44:05:45 h  & 85:57:56 h \\
		\hline
		\strut \ttt{20-cross}       & 5.21 s  & 9.80 s          & 18.06 s       & 2:15:07 h    &  H          & 1:22:42 h   & 1:07:12 h \\
		\hline
		\strut \ttt{24-cross}       & 1:57 m  & 2:22 m          & 5:53 m        & \textit{4:24} m      &  H          & T           & T  \\
		\hline
		\strut \ttt{28-cross}       & 54:34 m & 1:22:34 h       & 2:28:17 h     & \textit{1:36:45} h   &  H          & T           & T  \\
		\hline
		
	\end{tabular}}
	\vspace*{1ex}
	\caption{Computation times Normaliz (single thread) vs. vinci} \label{Benchmark_times}
\end{table}

Tables \ref{Benchmark_times} and \ref{Benchmark_memory} compare Normaliz, run with a single thread, to vinci \cite{vinci}, \cite{practical}. Among the algorithms offered by vinci we took the three that do not need third party software (except cdd) and do not a priori restrict the class of polytopes to which they can be applied. These are
\begin{enumerate}
\item rlass, a revised version of Lasserre's algorithm \cite{lass};
\item hot, ``hybrid orthonormalization technique'', a recursive algorithm that is very similar to Normaliz' descent;
\item rch, a revised version of Cohen and Hickey's combinatorial triangulation algorithm \cite{coh_hick}.
\end{enumerate}

\begin{table}[hbt]
	\begin{tabular}{|r|r|r|r|r|r|}
		\hline
		\strut                 & \multicolumn{2}{c|}{Normaliz 1x} & \multicolumn{3}{c|}{Vinci }   \\
		\cline{2-6}
		\strut                 &  \multicolumn{1}{c|}{primal}  &  \multicolumn{1}{c|}{descent} & \multicolumn{1}{c|}{rlass}&  \multicolumn{1}{c|}{hot} &  \multicolumn{1}{c|}{rch} \\
		\midrule[1.2pt]
		\strut \ttt{strict Borda}   &  0.75           & 0.3             &  58.3       & 1.37      & 0.003  \\
		\hline
		\strut \ttt{Condorcet}      &  T              & 4.32            &  R          & 42.5      & T  \\
		\hline
		\strut \ttt{4 rules}        &  T              & 35.42           &  R          & 317.26    & T \\
		\hline
		\strut \ttt{8x8-score}      &  T              & 14.27           &  E          & T         & T \\
		\hline
		\strut \ttt{6x6-magic}      &  T              & 23.06           &  E          & LD        & T \\
		\hline
		\strut \ttt{20-par}         &  T              & 1.06            &  72.14      & R         & T \\
		\hline
		\strut \ttt{24-par}         &  T              & 84.54           &  R          & T         & T  \\
		\hline
		\strut \ttt{20-cube}        &  T              & 1.06            &  26.94      & R         & T \\
		\hline
		\strut \ttt{24-cube}        &  T              & 18.79           &  R          & T         & T \\
		\midrule[1.2pt]
		\strut \ttt{bool mod $S_5$} & 0.79            & 0.66            &  T          & LD        & LD   \\
		\hline
		\strut \ttt{lin ord $S_6$}  & 0.58            & 1.97            &  H          & 1.85      & 0.001 \\
		\hline
		\strut \ttt{A443}           & 0.50            & 0.4             &  H          & LD        & LD   \\
		\hline
		\strut \ttt{A543}           & 0.72            & 1.79            &  H          & LD        & LD   \\
		\hline
		\strut \ttt{cyclo60}        & 0.77            & 66.05           &  H          & 1.19      & 0.2 \\
		\hline
		\strut \ttt{20-cross}       & 0.26            & 0.93            &  H          & 0.32      & 0.32 \\
		\hline
		\strut \ttt{24-cross}       & 9.71            & 15.94           &  H          & T         & T  \\
		\hline
		\strut \ttt{28-cross}       & 159.39          & 271.76          &  H          & T         & T  \\
		\hline
		
	\end{tabular}
	\vspace*{1ex}
	\caption{Memory usage Normaliz (single thread) vs. vinci} \label{Benchmark_memory}
\end{table}

\begin{remark}\label{vinci_rem}
(a) As pointed out in the introduction, vinci computes only Euclidean volumes. Floating point computations may be sufficient for applications as in Section \ref{Volumes Computations}, and, as a posteriori comparisons with the exact  rational computations of Normaliz show, the approximations computed by vinci are very good. However, we do not know of any a priori error bounds for volume computations in floating point.

(b) Not all computations were successful. The letter T in the tables indicates that the computation time exceeds 250 h. The letter R means that the RAM usage exceeds 500 GB. Moreover, LD stands for low dimension (see
(d)), and H indicates that the number of facets exceeds the preset bound of 254 for rlass. The letter E indicates that vinci ended with an error message whose cause
we could not find out.

(c) Convex hull computation and vertex enumeration are not contained in vinci. Instead we used cdd \cite{cdd} for this step as recommended by the vinci documentation, except in five computations where cdd did not finish within $50$ hours. These were done by lrs \cite{lrs}. The times of lrs are marked by italics in the table. The time spent by cdd or lrs has not been added to the vinci computation times, whereas the Normaliz times contain convex hull and vertex enumeration. These times were measured in separate computations and appear in the \texttt{-s} column of Table \ref{Benchmark_times}.

The maximum Normaliz time with a single thread for this step is 54:34 m (\texttt{28-cross}) and the second largest is 5:48 m (\texttt{8x8-score}). Benchmarks for convex hull computations that include cdd and Normaliz  can be found in \cite{pmk-bench} and \cite{Koeppe}. Note that the lrs times have the same order of magnitude as the Normaliz times, whereas the cdd times are often much larger.

(d) We did not succeed to compute the volumes of lower dimensional polytopes, i.e., polytopes whose dimension is smaller than that of the ambient space, with vinci. The volume computed by vinci is then $0$, which is of course correct with respect to the full dimensional volume of the ambient space, but this information is useless. In general, lower dimensional polytopes have no rational, isometric and full dimensional embedding, as the diagonal of the unit square shows.

The polytopes \texttt{strict Borda}, \texttt{Condorcet} and \texttt{4 rules} are lower dimensional, but for them there exists a workaround, for Euclidean as well as for lattice volumes. The latter are needed for applications like those in Section \ref{Volumes Computations}. Let $P$ be one of these polytopes. Then $H=\aff(P)$ is the affine hyperplane through the unit vectors. The lattice height of the origin over this hyperplane is $1$, and the Euclidean height is $1/\sqrt{24}$. Set $\overline P=\conv(P,0)$. It is enough to compute one of the volumes of $\overline P$ since
$$
\vol(\overline P) = \frac{1}{24} \sqrt{24}\vol(P),\qquad \Vol(\overline P)=\Vol(P), \qquad \Vol(\overline P)= 24! \vol(\overline P).
$$
\end{remark}

The descent algorithm of Normaliz uses the same recursive approach as vinci's
hot. But the implementations differ significantly. Normaliz' descent has ben designed for low memory usage, as pointed out in Section \ref{system}. This is a clear advantage for really large face lattices like those of \texttt{Condorcet}, \texttt{4 rules} or the parallelotopes, for which the memory usage differs by a factor of $10$ or more. On the other hand, when there is no shortage of memory, hot is sometimes significantly faster than Normaliz with a single thread. Of course, the floating point arithmetic of vinci is a general advantage for computation time.

As the primal algorithm of Normaliz, rch evaluates a reverse lexicographic triangulation. On the whole, it is significantly slower than Normaliz lexicographic triangulation method. Both algorithms have modest memory requirements.

Normaliz has no equivalent of rlass. Therefore a direct comparison is not possible. While rlass is certainly ultrafast when it can be applied, its greediness for memory sets a rather tight bound for applications. It is our impression that the recursion tree of rlass is growing extremely fast. (For \texttt{bo5} it broke the time barrier without any result, for which we had expected $0$ since the polytope is lower dimensional.)

(e) Several computations in the case of four candidates elections by Diss, Kamwa and Tlidi in \cite{Diss} were done with
Normaliz. We tried to repeat them with vinci, but did not succeed since
both rlass and hot broke the memory barrier.

\section{Application: Computations of volumes in four candidates elections}\label{Volumes Computations}

The appearance of rational polytopes in
social choice is fully discussed in \cite[Section 2]{BIS2} and we use the same notations in the following. We refer the reader to \cite{GL} or \cite{GL2}  for extra details and a more extensive
treatment. The basic assumption is that each voter has a linear preference order of the candidates in an election. If there are $n$ candidates, then the number of preference orders is $N=n!$. The result of the election is the vector $(x_1,\dots,x_N)$ that for every $i$ lists the number of voters having preference order  $i=1,\dots,N$. The further  computations are based on the \emph{Impartial Anonymous Culture} (IAC) assumption, see \cite[Section 2]{BIS2} for details. (IAC) assumes that for a fixed number $k$ of voters all election results have equal probability. This allows the computation of probabilities, as $k\to\infty$, of certain phenomena as lattice normalized volumes of rational polytopes.

\subsection{Four voting rules, same winner}\label{4_rules}
First, we recall four well-known voting rules.
\begin{enumerate}
\item The \emph{plurality rule} ($PR$): the voters cast one vote for their preferred candidate. The \emph{plurality winner} ($PW$) is the candidate which has the most first places in the
preference orders of the voters.

\item The \emph{negative plurality rule} ($NPR$): it requires the voters to cast one vote against their least preferred candidate. The \emph{negative plurality winner} ($NPW$) is the candidate which has the fewest last places in the
preference orders of the voters.

\item The \emph{majority rule} ($MR$): all voters preferences are considered and we say that a candidate $A$ "beats" a candidate $B$ by \emph{pairwise majority rule}
if there are more voters which prefer $A$ to $B$ than voters that prefer $B$ to $A$. The \emph{Condorcet winner} ($CW$), i.e. the majority rule winner,
is the candidate which beats all other candidates by the pairwise majority rule.
As the Marquis de Condorcet \cite{C} observed, the relation "beats" is nontransitive in general, and one must consider the possibility of Condorcet's paradox, namely an outcome  without a Condorcet winner.

\item The \emph{Borda rule} ($BR$): this is a weighted scoring rule which in the particular case of four candidates assigns $3$ points to a candidate for each most-preferred ranking in a
voter's preferences, $2$ points for each second-place ranking, $1$ point for each third-place ranking and zero points for each least-preferred ranking. The \emph{Borda winner} ($BW$) is the candidate which cumulates
the most points.
\end{enumerate}

We want to compute the probability that all four voting rules deliver the same winner in four candidates elections as the numbers of voters $k$ goes to $\infty$.

Let us choose a candidate $A$. The polytope $\cP$ associated to the event that $A$ is the winner of all four voting rules is cut by $36$ inequalities and $1$ equation from $\RR^{24}$: $24$ inequalities $x_i\ge 0$, $3$ inequalities for each of the $4$ rules fixing $A$ as the winner, and the equation $x_1+\dots+x_{24}=1$; see \cite{BIS2} for several related systems of equations.
The combinatorial data of the polytope $\cP$ and the computation time are listed in Table \ref{numerical_data} and Table \ref{calculation_times}, \texttt{4 rules}.

We have obtained
$$
\vol \cP=\frac{a}{b},
$$
where
\begin{align*}
	a= & \scalebox{0.9}{15434295102897069492696787224587569493324878059069286556500157094466280221} \\
	& \scalebox{0.9}{0031839904092203533576766900008697462518883193615863751857064434519917747}
\end{align*}
and
\begin{align*}
	b= & \scalebox{0.9}{19734891994161694286368932836293271062441599301077174316463585667073366250} \\
	& \scalebox{0.9}{92787497360174222493081399494071993084340140223731960203182080000000000000.}
\end{align*}

The probability that all four voting rules deliver the same winner in four candidates elections may then be computed as
$$
4\cdot \vol \cP= \frac{4\cdot a}{b}\approx 0.312833.
$$
We were surprised by this rather small value: even if a Condorcet winner exists, the winner of the actual voting scheme is rather unpredictable. It would of course be possible to analyze the situation further by considering $3$ rules versus the $4$-th in each case. The computations need some hours, but they are well accessible.

\subsection{On Condorcet's other paradox}  In \cite{C2} Condorcet presents several examples of voting paradoxes that may appear in three candidates elections.
In particular we are interested in \cite[Example 4, page 150]{C2}, illustrating a voting situation in which the Condorcet winner is the same as the plurality winner, but not the Borda winner.
We want to compute the probability that this phenomenon will appear in four candidates elections under (IAC).

Set candidate $A$ to be both the plurality and the Condorcet winner.
Since the Borda rule gives a total order of the candidates, we have four situations that may appear:
\begin{enumerate}
	\item $A$ is placed first by the Borda rule. We denote the corresponding polytope by $\cQ_1$;
	\item $A$ is placed second by the Borda rule. We have to make a choice for the winning candidate. Assume that $B$ beats $A$ by the Borda rule and denote the corresponding polytope by $\cQ_2$;
	\item $A$ is placed third by the Borda rule. We have to make a choice for the losing candidate. Assume that $B$ and $C$ beat $A$ by the Borda rule ($D$ is placed on last place) and denote the corresponding polytope by $\cQ_3$;
	\item $A$ is placed last by the Borda rule (or in other words $A$ is the \emph{Borda loser}). We denote the corresponding polytope by $\cQ_4$;
\end{enumerate}

All polytopes  of this family are cut by $33$ inequalities and $1$ equation in dimension $24$. Not all the inequalities are relevant, however minimizing the number of inequalities does not make the problem easier to solve.

A fast computation shows that the polytope $\cQ_4$ has empty absolute interior (i.e., the dimension is $<23$), so its 23-dimensional volume is zero. In fact, it is known in general, see \cite[Theorem 4]{FG},
that the Condorcet winner cannot be the Borda loser. Further, there are two independent ways to compute the probability that the Condorcet winner is the same as the plurality winner, but not the Borda winner.
It can be computed directly, with the formula:
$$
12\cdot (\vol \cQ_2+\vol \cQ_3).
$$
It can also be computed indirectly, using the fact that the probability that the Condorcet winner is the same as the plurality winner was computed previously in \cite{Sch}.
We recall from \cite[Subsection 2.3]{BIS2} that the volume of the polytope associated  is:
$$
\vol \cE=\frac{10658098255011916449318509}{68475651442606080000000000},
$$
and the formula is:
$$
4\cdot (\vol \cE-\vol \cQ_1).
$$

We have computed:

\begin{align*}
	\vol\cQ_1 =& \frac{\mbox{\scalebox{0.8}{155143659305367638658204514673150261711154597948604269685210422288200009}}}{\mbox{\scalebox{0.8}{1102320838271070278766883635115881896290018550251848550368411648000000000}}},\\
	\vol\cQ_2 =& \frac{\mbox{\scalebox{0.8}{8007917191946827148905632396266883808060150761021309697108559220076039}}}{\mbox{\scalebox{0.8}{1653481257406605418150325452673822844435027825377772825552617472000000000}}},\\
	\vol\cQ_3 =& \frac{\mbox{\scalebox{0.8}{2072705500667484952215435851434572363770941977453049707343465792912717}}}{\mbox{\scalebox{0.8}{16534812574066054181503254526738228444350278253777728255526174720000000000}}}.
\end{align*}

Both ways of computing the probability that the Condorcet winner is the same as the plurality winner, but not the Borda winner, deliver the same result, that is:
$$\frac{\mbox{\scalebox{0.8}{82151877420135756441271759814103410444372449587666146678429057993673107}}}{\mbox{\scalebox{0.8}{1377901047838837848458604543894852370362523187814810687960514560000000000}}}\approx 0.059621.$$

\subsection{Condorcet efficiency of elimination} In this subsection we study the  \emph{Condorcet efficiency} of elimination procedures.
This is the conditional probability that the Condorcet winner, provided that such winner
exists, is elected by a certain voting scheme, as the number of voters $k\to\infty$. We consider the following two voting schemes.

First, \emph{the plurality elimination rule}: this is an iterative procedure, in which, at
each voting step, the candidate who obtained the minimum number of first place votes is
eliminated. The last candidate non eliminated is the winner. Second, \emph{the negative plurality elimination rule}: similarly, at each voting step the candidate with the maximum number of last place votes is eliminated.
For four candidates elections both lead to three-stage elimination procedures, thus our study here completes the data presented in Table 7.4 of \cite{GL2}.

It seems that the simplest way to compute this probability is to consider the complementary event, that is the event that the Condorcet winner is eliminated either in the first or the second round. Notice that, if the Condorcet winner will pass through the first and the second round, then he or she will automatically win the third round, so the study of the third round is not needed.

The outcome that the Condorcet winner is eliminated in the first round of a certain voting scheme is called \emph{the (reverse) strong Borda paradox} and its study was first introduced
by the Chevalier de Borda in \cite{B}. The occurrence of the (reverse) strong Borda paradox under both the plurality rule and the negative plurality rule was fully studied in subsection 2.5 of
\cite{BIS2} to where we refer the reader for details and we only recall here that the volume of the polytope associated with the strong Borda paradox is
$$
\vol \cB_{\Sg}=\frac{325451674835828550681491}{68475651442606080000000000} = \vol \cB_{\RevNPR},
$$
while the volume of the polytope associated with reverse strong Borda paradox is
$$
\vol \cB_{\SgRev}=\frac{104898234852130241}{21035720123168587776} = \vol \cB_{\RevPR}.
$$
We also refer the reader to \cite[Remark 3 (a)]{BIS2} for extra needed details, which will clarify the second notation used.

Lest us denote by $\cF$ the polytope corresponding to the event that candidate $A$ is the Condorcet winner, candidate $D$ is eliminated in the first round and candidate $A$ is eliminated in the second round.

Then, the Condorcet efficiency of both elimination rules may be computed with the formula
$$
\frac{p_{A=\CW}-\vol \cB_{\Rev}-3\vol \cF }{p_{A=\CW}},
$$
where $\vol \cB_{\Rev}$ should be replaced by $\vol \cB_{\RevPR}$ and $\vol \cB_{\RevNPR}$.

We have obtained
$$
\vol \cF_{\PR}=\frac{6537508029403236323215409545161316879405265171603}{1989889702166773519891328549909849702400000000000000},
$$
so that the Condorcet efficiency of the plurality elimination rule turns out
to be
$$
\frac{129178312275188795293522359266689257253407234828397}{139023462671726486558162887377734860800000000000000}
\approx 0.929184,
$$
while
$$
\vol \cF_{\NPR}=\frac{87391394898401644146716674012811354620163132417}{31091026140009682822081785811945799024640000000000},
$$
so that the Condorcet efficiency of the negative plurality elimination rule turns out
to be
$$
\frac{2035523745603707762358521726967860659560986470207}{2172171707454289770732195078088823930880000000000}
\approx 0.937092.
$$

It may come as a surprise the fact that the Condorcet efficiency of the negative plurality elimination rule is greater
than the Condorcet efficiency of the plurality elimination rule. This is not totally unexpected, considering the data presented in
Table 7.4 of \cite{GL2} for three candidates two-rounds elimination procedures. However, in order to check our results, we have computed the probabilities for all ten possible results that
the Condorcet winner may obtain in  the three-rounds elimination procedures. The approximative numbers are contained in the tables below.
For space reasons we did not include here the full exact data, which is available on request from the authors.

For the plurality elimination rule the probabilities are contained in Table \ref{PR_data}.
\begin{table}[hbt]
	\centering
	\begin{tabular}{|c|c|c|c|c|}
		\hline
		\multicolumn{2}{ |c| }{}   & \multicolumn{3}{ |c| }{2-nd round} \\
		\cline{3-5}
		\multicolumn{2}{ |c| }{}  & I & II  & III  \\ \hline
		&   I   & 0.69605467532 & 0.04320695864  & 0.00335247384       \\ \cline{2-5}
		\strut 1-st round            &   II  & 0.06678615010 & 0.08902016651  & 0.01327777245       \\ \cline{2-5}
		&   III & 0.01396067951 & 0.02015490336  & 0.03039424802       \\ \hline
		
	\end{tabular}
	\vspace*{2ex} \caption{Probabilities under PR}\label{PR_data}
\end{table}

For the negative plurality elimination rule the probabilities are contained in Table \ref{NPR_data}.
\begin{table}[hbt]
\centering
\begin{tabular}{|c|c|c|c|c|}
\hline
\multicolumn{2}{ |c| }{}   & \multicolumn{3}{ |c| }{2-nd round} \\
\cline{3-5}
\multicolumn{2}{ |c| }{}  & I & II  & III  \\ \hline
&   I   & 0.46569938269    & 0.07611279571    & 0.00978979031       \\ \cline{2-5}
\strut 1-st round            &   II  & 0.16256921634    & 0.11815379945    & 0.01272253146       \\ \cline{2-5}
&   III & 0.04072126773    & 0.07383508505    & 0.01771994652       \\ \hline

\end{tabular}
\vspace*{2ex} \caption{Probabilities under NPR}\label{NPR_data}
\end{table}

The entries in both tables should be read as follows: the entry at row $i$ and column $j$ represents an approximation of the
conditional probability that the Condorcet winner obtains the $i$-th place in the first round and $j$-th place in the second round, under the assumption that such a winner exists.
The missing number is the conditional probability that the Condorcet winner is eliminated in the first round, or in other words the probability of the (corresponding) reverse Borda paradox.
Those probabilities have been computed in subsection 2.5 of \cite{BIS2} (they are also reported in \cite[Table 7.5]{GL2}). More precisely,
the probability of the reverse strong Borda paradox under the plurality rule (respectively the negative plurality rule) is $\frac{104898234852130241}{4408976007260798976}\approx 0.02379$
(respectively $\frac{325451674835828550681491}{14352135440302080000000000}\approx 0.02268$). The exact numbers obtained add perfectly, in both cases studies the sum of all ten numbers equals $1$.

\begin{remark}\label{3rounds} The examples in this subsection are also computable by pyramid decomposition and symmetrization as discussed in \cite{BIS} and \cite{BS}.	 However, this will take several weeks on a quite powerful system in place of minutes on a rather standard computer.
\end{remark}

\end{document}